\newcommand\unnumberedfootnote[1]{ %
        \let\temp=\thefootnote %
        \renewcommand{\thefootnote}{}%
        \footnote{#1}%
        \let\thefootnote=\temp%
        \addtocounter{footnote}{-1}}
\newtheorem{theorem}{Theorem}
\newtheorem{proposition}{Proposition}[section]
\newtheorem{corollary}[proposition]{Corollary}
\theoremstyle{definition}
\newtheorem{remark}[proposition]{Remark}
\numberwithin{equation}{section}
\DeclareMathAlphabet{\mathpzc}{OT1}{pzc}{m}{it}
\begin{document}
\title{\LARGE Fixation probabilities and hitting times \\for  low levels of
  frequency-dependent selection}

\thispagestyle{empty}

\author{{\sc by P. Pfaffelhuber and A. Wakolbinger} }
  \date{\today}

\maketitle
\unnumberedfootnote{\emph{AMS 2010 subject classification.} {\tt
    60J70} (Primary) {\tt 91A22, 92D15, 91A15} (Secondary).}

\unnumberedfootnote{\emph{Keywords and phrases.} Diffusion processes,
  frequency-dependent selection, Green function, evolutionary game
  theory, frequency spectrum}

\begin{abstract}
  \noindent
  In population genetics, diffusions on the unit interval are often
  used to model the frequency path of an allele. In this setting we
  derive approximations for fixation probabilities, expected hitting
  times and the expected frequency spectrum for low levels of
  frequency-dependent selection. Specifically, we rederive and extend
  the one-third rule of evolutionary game theory (Nowak et al., 2004)
  and effects of stochastic slowdown (Altrock and Traulsen,
  2009). Since similar effects are of interest in other application
  areas, we formulate our results for general one-dimensional
  diffusions.
\end{abstract}

\section{Introduction}\label{Sec1}
Our motivation for this note came from the desire to find a both
intuitive and generalizable explanation for the so-called {\em
  one-third rule} in evolutionary game theory
\citep{Nowak2004}. Phrased in the language of population genetics, the
one-third rule says: {\em Assume that~$X$ is a Wright-Fisher diffusion with
  selection coefficient $\alpha>0$ and linear frequency-dependent
  selection
\begin{equation}\label{drift}
  \psi(y) = \beta -\gamma y, \quad 0\le y\le 1,
\end{equation}
with $\beta, \gamma \in \mathbb R$}, i.e. $X$ is a $[0,1]$-valued process
$X$ satisfying the stochastic differential equation
\begin{equation}\label{WF}
  dX_t = \alpha \psi(X_t)X_t(1-X_t) + \sqrt{X_t(1-X_t)}dW_t
\end{equation} 
with $W$ being a standard Brownian motion. {\em Then the probability
  of fixation in 1 is, for small positive $\alpha$ and a small initial
  frequency $x$, larger than $x$ (which is the fixation probability in
  the neutral case $\alpha=0$; see e.g.\ (5.17) in
  \citealp{Ewens2004}) if and only if
  $\psi(\tfrac 13)>0$}.

We write $T_0$ and $T_1$ for the first times at which $X$ hits the
boundaries $0$ and $1$, respectively, and $T := T_0 \wedge T_1$ for
the first time at which $X$ hits the boundary $\{0,1\}$.
We ask the following questions: Under which conditions on the
frequency dependent selection $\psi$ -- which may then as well be more
general than given in \eqref{drift} -- is for small positive $\alpha$,
but not necessarily small $x$,
\\\\
{$\bullet$ the fixation probability $\mathbf P^\alpha_x(X_T=1)$ larger
  than $x$ (which is the fixation probability in the {\em neutral}
  case, $\alpha=0$)}
\\\\
{$\bullet$ the expected time to fixation (either unconditional or
  conditional on fixation) or extinction larger than that in the
  neutral case?}
\paragraph{Hitting probabilities.}
To illustrate our findings, let us come back to $\psi$ as given in
\eqref{drift}. We will show (see Corollary~\ref{cor1}) that
\begin{equation}
  \label{eq:change}
  \begin{aligned}
    \mathbf P^\alpha_x(X_T=1) = x + \alpha x(1-x)(\beta - \tfrac \gamma 3 (1 +
    x)) + \mathcal O(\alpha^2)
  \end{aligned}
\end{equation}
as $\alpha \to 0$.
This gives the following generalization of the one-third rule:

\smallskip \noindent
{\em For the Wright-Fisher diffusion \eqref{WF} with frequency dependent selection $\psi$ given by \eqref{drift}, the fixation probability in 1 is,
  for small positive $\alpha$ and fixed initial frequency $x$, larger
  than~$x$ (which is the fixation probability in the neutral case
  $\alpha=0$) if and only if $\psi(\tfrac {1+x}3)>0$.}
\paragraph{Fixation times.} In a series of papers
\citep{Altrock2009,Altrock2010,Altrock2012} have reported the
following -- at first sight maybe counter-intuitive -- result on what
they call a {\em stochastic slowdown effect}: Conditioned on fixation,
a selective allele can have a longer expected fixation time than a
neutral one. This effect was analyzed in the just quoted papers for a
finite population. Some structural insights, however, arise
in the appropriate diffusion limit. Thus, in Theorem~\ref{T2} we will
analyze the expected hitting time of the boundary $\{0,1\}$ as
$\alpha\to 0$. 
 In
a similar way, in Theorems~\ref{T3} and~\ref{T4} we will obtain the
approximate expected conditional hitting times of boundary points 1
and~0 as $\alpha \to 0$ when starting near $0$. For the Wright-Fisher
diffusion \eqref{WF}, Corollaries \ref{cor2}, \ref{cor3} and
\ref{cor4} specialize to
\begin{align} 
  \mathbf E^\alpha_x[T]&= 2x - 2x\log x + 2\alpha \beta x +  o(x, \alpha) \quad \mbox{ as }x,\alpha \to 0, \label{ETWF}\\
  \mathbf E_{0+}^{\alpha \ast}[T_1] &=  2 + \alpha \frac \gamma 9+  o(x, \alpha) \quad \mbox{ as }\alpha \to 0,
  \label{ET1WF}\\
  \mathbf E^\alpha_{x\ast}[T_0]&= - 2x\log x + \alpha x \frac{5\gamma}{9} +  o(x, \alpha) \quad \mbox{ as }x,\alpha \to 0, \label{ET2WF}
\end{align}
where $\mathbf E_x^{\alpha \ast}$ and $\mathbf E_{x\ast}^{\alpha}$
denote the conditional expectation $\mathbf E_x^\alpha [.|T_1<T_0]$
and $\mathbf E_x^\alpha [.|T_0<T_1]$, respectively. Notably,
\eqref{ETWF} does not depend on~$\gamma$, whereas \eqref{ET1WF} and
\eqref{ET2WF} do not depend on $\beta$, and, while $\beta$ and
$\gamma$ enter with different signs in \eqref{drift}, all signs in
\eqref{ETWF} -- \eqref{ET2WF} are positive. Intuitive
interpretations / explanations of these facts will be given in Section  \ref{Sec4}.
\paragraph{Frequency spectrum.}
Consider a population whose allele frequencies follow \eqref{WF}; see
e.g.\ \cite{Busta2001} for such a model. Assume that $\psi$ satisfies \eqref{drift}.
Let
$f^\alpha(x)dx$ be the expected number of alleles at frequency $x$ in
a model with constant immigration of new alleles. (For details, see
Theorem~\ref{T5}.). We obtain from Corollary \ref{cor5}
\begin{align}
  \label{ET3WF}
  f^\alpha(x) & = \frac 1x + \alpha(\beta + \gamma(1-2x)) +  o(\alpha) \quad \mbox{ as }\alpha \to 0,
\end{align}
i.e.\  for low levels of selection there are more alleles in low frequencies than there are in the neutral case.
\paragraph{Diploid populations.}
Classically, linear frequency dependent selection as given above also
arises in diploid populations undergoing selection, which lead to
$\psi(x) = h + x(1-2h)$, and $h$ is called the {\em dominance
  coefficient}. If $h=0$, the diffusion models the frequency path of a
selected recessive allele, whereas the allele is dominant for
$h=1$. In the case $h\in(0,1)$, we speak of incomplete
dominance. Overdominance refers to $h>1$, and means (if $\alpha>0$)
that the heterozygote is fitter than any homozygote. Finally,
underdominance refers to $h<0$, and implies that the homozygote is
less fit (if $\alpha>0$) than any homozygote.

By setting $\beta=h$ and $\gamma = 2h-1$, \eqref{ETWF} then implies
the (somewhat counter-intuitive) result that fixation or extinction of
a positively (i.e.\ $\alpha>0$) selected allele takes longer than
under neutrality for $h>1/2$. Moreover, \eqref{ET1WF} gives that --
conditional on fixation -- the positively selected allele takes longer
to fix than under neutrality if $h>1/2$. The latter result was shown
already by \cite{LachmannMafessoni2015}. In addition, they report that
mildly deleterious (i.e.\ $\alpha<0$) recessive (i.e.\ $h=0$) alleles
on average survive in a population slightly longer than neutral ones,
before getting lost. This is a direct consequence of \eqref{ET2WF}.

~~

This paper is organized as follows: In Section~\ref{Sec2}, we derive
an approximation for the fixation probability (Theorem~\ref{T1}) for
small $\alpha$. Unconditional hitting times (Theorem~\ref{T2}) as well
as conditional hitting times (Theorems~\ref{T3} and~\ref{T4}) are
treated as well, also in the situation of a more general
one-dimensional diffusion \eqref{eq:SDE}. Finally, we compute the
effect of low levels of frequency-dependent selection on the frequency
spectrum (Theorem~\ref{T5}). All our theorems come with corollaries
which treat the special case, where $X$ is a solution of \eqref{WF}
and $\psi$ is a {\em polynomial} describing the frequency
dependence. Such a polynomial frequency-dependence is the topic of
Section~3, which starts by recalling how the Wright-Fisher dynamics
\eqref{WF} arises as a scaling limit of an evolutionary game. We will
discuss evolutionary games in a haploid population, give the
generalization of the one-third rule as described above, and connect
our results to the effect of stochastic slowdown. In addition, we will
review a diploid situation considered in \cite{Hashimoto2009}, and
show how this leads to a frequency dependence given by $\psi$ being a
polynomial of degree~3. Our Theorem will then directly render (and
explain) the 2/5 and 3/10 rules discovered in
\cite{Hashimoto2009}. Section~\ref{Sec4} contains a discussion and
some more implications of our findings.  In Appendix~\ref{Sec5}, we
give the proofs to all our main results from Section~\ref{Sec2}.

\section{Main results}\label{Sec2}
In this section, we are concerned with the following situation.  Let
$\sigma: [0,1] \mapsto \mathbb R_+$ have a continuous derivative, and
let $\psi: [0,1] \mapsto \mathbb R$ be such that
$\mu:= \psi \cdot \sigma^2$ is bounded. For $\alpha \in \mathbb R$,
let $X$ under the measure $\mathbf P^\alpha_x$ be the It\^o diffusion
started in $X_0=x \in (0,1)$ and solving
\begin{align}
  \label{eq:SDE}
  dX = \alpha \mu(X) dt + \sigma(X) dW
\end{align}
up to the first hitting time $T = T_0 \wedge T_1$ of $\{0,1\}$. A most
important special case is that $\sigma^2(y) = y(1-y)$, which brings us
back to the Wright-Fisher diffusion with frequency dependent
selection, and that $\psi$ is a polynomial.  However, we note that our
results also apply to more general $\sigma^2$ and~$\psi$.  We
formulate our results for the state space $[0,1]$ (also because of
notational convenience); versions for more general state spaces
$[u,v]$ are easily obtained by scaling.

Our first result is on fixation probabilities for low levels of selection, and can be seen as a generalization of the $1/3$-rule described after
\eqref{eq:change}.
\begin{theorem}[Hitting probabilities\label{T1}]
  We have
  \begin{align}\label{mainres}
    \frac{1}{2\alpha} 
    \Big( \mathbf P^\alpha_x(T_1 < T_0) - x\Big) 
    & 
      \xrightarrow{\alpha\to 0} x\int_0^1(1-y)\psi(y) dy - \int_0^x 
      (x-y)\psi(y) dy.
  \end{align}
\end{theorem}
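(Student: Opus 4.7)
The natural approach is to exploit the classical scale function formula. Write the SDE \eqref{eq:SDE} in the form $dX = \alpha\mu(X)\,dt + \sigma(X)\,dW$ with $\mu = \psi\sigma^2$; then its generator is $L = \tfrac12 \sigma^2 \partial_x^2 + \alpha\mu\,\partial_x$, and a scale function $S$ is obtained by solving $LS = 0$ together with the boundary values $S(0)=0$, $S(1)=1$. This yields
\begin{equation*}
S(x) \;=\; \frac{\int_0^x e^{-2\alpha \Psi(z)}\,dz}{\int_0^1 e^{-2\alpha \Psi(z)}\,dz},
\qquad \Psi(z) := \int_0^z \psi(y)\,dy,
\end{equation*}
and $\mathbf P^\alpha_x(T_1<T_0) = S(x)$. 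So the entire theorem reduces to a first-order expansion of $S(x)$ in $\alpha$.

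The plan is as follows. First I would Taylor-expand the exponential, $e^{-2\alpha\Psi(z)} = 1 - 2\alpha\Psi(z) + O(\alpha^2)$, uniformly in $z\in[0,1]$ (this uniformity uses that $\psi$ is integrable on $[0,1]$, which is where the boundedness hypothesis on $\mu=\psi\sigma^2$ enters for the Wright--Fisher examples of interest). Integrating gives
\begin{equation*}
\int_0^x e^{-2\alpha\Psi(z)}\,dz \;=\; x - 2\alpha \int_0^x \Psi(z)\,dz + O(\alpha^2),
\end{equation*}
and likewise for the denominator with $x=1$. Then I would expand the ratio using $(1-u)^{-1} = 1 + u + O(u^2)$, collect the $O(\alpha)$ term, and divide by $2\alpha$ to obtain
\begin{equation*}
\frac{1}{2\alpha}\Big(S(x) - x\Big) \;\xrightarrow{\alpha\to 0}\; x\int_0^1 \Psi(z)\,dz \;-\; \int_0^x \Psi(z)\,dz.
\end{equation*}

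The last step is a Fubini (or integration-by-parts) manipulation to identify the limit with the claimed expression. Swapping the order of integration,
\begin{equation*}
\int_0^1 \Psi(z)\,dz = \int_0^1\!\!\int_y^1 dz\,\psi(y)\,dy = \int_0^1 (1-y)\psi(y)\,dy,
\end{equation*}
and analogously $\int_0^x \Psi(z)\,dz = \int_0^x (x-y)\psi(y)\,dy$, which matches \eqref{mainres} exactly.

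The main obstacle is really just the uniform control of the remainder $e^{-2\alpha\Psi(z)} - 1 + 2\alpha\Psi(z) = O(\alpha^2 \Psi(z)^2)$: one needs $\Psi$ to be bounded (or at least square-integrable) on $[0,1]$ to conclude that the error is genuinely $o(\alpha)$ and not swallowed by an endpoint singularity. Once $\psi$ is integrable (as is the case when $\sigma^2$ is Wright--Fisher and $\psi$ is polynomial), this control is automatic and the whole argument is a short computation. No further probabilistic machinery is needed: every subsequent theorem in Section~2 will similarly rest on closed-form expressions (Green function, speed measure) whose $\alpha$-expansions are obtained in the same elementary way.
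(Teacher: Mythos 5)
Your proposal is correct and follows essentially the same route as the paper's proof in the Appendix: express $\mathbf P^\alpha_x(T_1<T_0)$ via the scale function $S(x)=\int_0^x e^{-2\alpha\int_0^y\psi(z)\,dz}\,dy$, linearize the exponential to get $S(x)=x-2\alpha\int_0^x(x-y)\psi(y)\,dy+\mathcal O(\alpha^2)$ by Fubini, and expand the ratio $\big(S(x)-S(0)\big)/\big(S(1)-S(0)\big)$ to first order in $\alpha$. Your added remark on uniform control of the remainder (integrability of $\psi$, which the paper subsumes under ``if all integrals exist'') is a sensible clarification but does not change the argument.
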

Next we specialize this to monomial $\psi$; by linearity of the
r.h.s. of \eqref{mainres} in $\psi$, this then immediately extends to
polynomial $\psi$.
\begin{corollary}[Hitting probabilities for polynomial
  $\psi$\label{cor1}]
  Let $k=0,1,2,...$ 
  \begin{enumerate}
  \item If $\psi(x) = x^k$, then
    \begin{align*}
      \frac{\partial}{\partial \alpha}\mathbf P^\alpha_x(T_1 < T_0) = \frac{1}{\binom{k+2}{2}}x(1-x^{k+1}) .
    \end{align*}
    In particular, for $k=0$,
    \begin{align*}
     \frac{\partial}{\partial \alpha} \mathbf P^\alpha_x(T_1 < T_0) = x(1-x),
    \end{align*}
    while for $k=1$, 
    \begin{align*}
     \frac{\partial}{\partial \alpha} \mathbf P^\alpha_x(T_1 < T_0) = \tfrac 13 x(1-x^2).
    \end{align*}
  \item If $\psi(x) = (1-x)^k$, then
    \begin{align*}
      \frac{\partial}{\partial \alpha}\mathbf P^\alpha_x(T_1 < T_0) = \frac{1}{\binom{k+2}{2}}(1-x)(1-(1-x)^{k+1}).
    \end{align*}
  \end{enumerate}
\end{corollary}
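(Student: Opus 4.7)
Theorem~\ref{T1} rewrites as the first-order Taylor expansion
\[
\mathbf{P}^\alpha_x(T_1<T_0) \;=\; x \;+\; 2\alpha\Big(x\int_0^1(1-y)\psi(y)\,dy \;-\; \int_0^x(x-y)\psi(y)\,dy\Big) \;+\; o(\alpha),
\]
so that the derivative at $\alpha=0$ equals twice the bracketed expression. The plan is therefore to evaluate this pair of elementary integrals for each monomial choice of $\psi$; no further probabilistic input is required, since Theorem~\ref{T1} has already done the analytic work of passing from the SDE to a closed-form integral formula, and what remains is bookkeeping.

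For part (i), with $\psi(y) = y^k$, I would split $1-y$ and $x-y$ linearly and use $\int_0^1 y^k\,dy = 1/(k+1)$, $\int_0^x y^k\,dy = x^{k+1}/(k+1)$ to obtain
\[
\int_0^1(1-y)y^k\,dy = \frac{1}{(k+1)(k+2)}, \qquad \int_0^x(x-y)y^k\,dy = \frac{x^{k+2}}{(k+1)(k+2)}.
\]
Substituting into the derivative formula then gives $\dfrac{2(x-x^{k+2})}{(k+1)(k+2)} = \dfrac{x(1-x^{k+1})}{\binom{k+2}{2}}$, as claimed, and the cases $k=0,1$ are read off directly.

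For part (ii), with $\psi(y) = (1-y)^k$, my preferred route avoids a fresh computation via the substitution $y \mapsto 1-y$. A short direct check shows that, for any $\psi$ and $x$, with $\tilde\psi(z) := \psi(1-z)$ one has the algebraic identity
\[
x\int_0^1(1-y)\psi(y)\,dy - \int_0^x(x-y)\psi(y)\,dy \;=\; (1-x)\int_0^1(1-z)\tilde\psi(z)\,dz - \int_0^{1-x}((1-x)-z)\tilde\psi(z)\,dz.
\]
Since here $\tilde\psi(z) = z^k$, part (i) applied at the point $1-x$ with exponent $k$ immediately yields $\dfrac{(1-x)(1-(1-x)^{k+1})}{\binom{k+2}{2}}$. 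A valid alternative is direct integration by parts in the inner integral with $u = x-y$ and $dv = (1-y)^k\,dy$, which produces the same answer after tracking signs and boundary terms; the only mild hazard is the bookkeeping there, which the symmetry shortcut sidesteps entirely.
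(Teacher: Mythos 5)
Your proposal is correct and follows essentially the same route as the paper: read off the $\alpha$-derivative as twice the right-hand side of Theorem~\ref{T1} and evaluate the two elementary integrals, which for part~(i) reproduces the paper's computation line for line. The only divergence is in part~(ii), where the paper expands directly by writing $x-y=(1-y)-(1-x)$ and integrating, whereas you reduce to part~(i) via the reflection identity $F(x;\psi)=F(1-x;\psi(1-\cdot))$ (which is easily verified, e.g.\ since both differences equal $\int_0^1(x-y)\psi(y)\,dy$ plus its negative); this is exactly the substitution $y\mapsto 1-y$, $z\mapsto 1-z$ that the authors themselves deploy in the proofs of Corollaries~\ref{cor3} and~\ref{cor4}, so it is a legitimate and arguably cleaner variant rather than a different method.
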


\noindent
We now turn to the analysis of fixation times. First, we are dealing
with the unconditional case, i.e.\ the the expectation of the hitting
time $T$. Note that, for $\psi$ as in \eqref{drift},
Corollary~\ref{cor2} specializes to \eqref{ETWF} in the introduction.

\begin{theorem}[Expected hitting time --
  unconditional case\label{T2}]
  Consider the same situation as in Theorem~\ref{T1}. Then, if all
  integrals exist,
  \begin{equation}
    \label{eq:T2}
    \begin{aligned}
      \lim_{\alpha\to 0} \frac{1}{4\alpha} \Big(\frac 1x\Big( \mathbf
      E_x^\alpha[T] & - 2\int_0^x \frac{y}{\sigma^2(y)} dy -
      2x\int_x^1 \frac{(1-y)}{\sigma^2(y)} dy\Big) \Big) \\ &
      \xrightarrow{x\to 0} \int_0^1\frac 1{\sigma^2(y)}
      \Big((1-y)\int_0^y (1-z)\psi(z)dz - y \int_y^1
      (1-z)\psi(z)dz\Big) dy.
    \end{aligned}
  \end{equation}
\end{theorem}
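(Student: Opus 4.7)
My plan is to expand $u_\alpha(x) := \mathbf{E}_x^\alpha[T]$ to first order in $\alpha$ via perturbation of the Dirichlet problem it solves, and then extract the leading small-$x$ behaviour of the $\alpha$-derivative. Writing $L_\alpha := \tfrac12 \sigma^2(x) \partial_x^2 + \alpha \mu(x) \partial_x$, the function $u_\alpha$ is characterized by $L_\alpha u_\alpha = -1$ on $(0,1)$ with $u_\alpha(0) = u_\alpha(1) = 0$. Setting $u_\alpha = u_0 + \alpha v + R_\alpha$ and matching orders in $\alpha$ gives $L_0 u_0 = -1$, $L_0 v = -\mu u_0'$ (both with zero boundary values), and $L_\alpha R_\alpha = -\alpha^2 \mu v'$. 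The standard Green function for $L_0$ produces $u_0(x) = 2(1-x)\int_0^x y/\sigma^2(y)\,dy + 2x \int_x^1 (1-y)/\sigma^2(y)\,dy$, and hence $u_0'(y) = 2\int_y^1 (1-z)/\sigma^2(z)\,dz - 2\int_0^y z/\sigma^2(z)\,dz$.

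Since $\mu = \psi \sigma^2$, the equation for $v$ simplifies to $v'' = -2\psi u_0'$; integrating twice and using the boundary conditions yields
\begin{equation*}
  v(x) = 2x \int_0^1 (1-y)\psi(y) u_0'(y)\,dy - 2\int_0^x (x-y) \psi(y) u_0'(y)\,dy.
\end{equation*}
Under the theorem's integrability hypothesis the second integral is $o(x)$ as $x \to 0$, so $v(x)/(4x) \to \tfrac12 \int_0^1 (1-y)\psi(y) u_0'(y)\,dy$. The main algebraic step is then a Fubini interchange: inserting the integral expression for $u_0'(y)$ and swapping the order of integration in each of the two resulting double integrals converts this limit into $\int_0^1 \sigma^{-2}(z)\bigl[(1-z)\int_0^z (1-y)\psi(y)\,dy - z\int_z^1 (1-y)\psi(y)\,dy\bigr]\,dz$, which is the right-hand side of \eqref{eq:T2}.

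To finish, I reconcile the subtracted quantity $A(x) := 2\int_0^x y/\sigma^2\,dy + 2x \int_x^1 (1-y)/\sigma^2\,dy$ with $u_0(x)$: a direct computation gives $A(x) - u_0(x) = 2x \int_0^x y/\sigma^2\,dy$, which is $o(x)$ as $x \to 0$, so the difference is negligible after division by $x$. Combined with a uniform bound $R_\alpha(x) = O(\alpha^2)$ — obtained from the Green function representation of $R_\alpha$ as the solution to $L_\alpha R_\alpha = -\alpha^2 \mu v'$ and smoothness of the scale and speed measures in $\alpha$ — this gives $[\mathbf{E}_x^\alpha[T] - A(x)]/(4\alpha x) = v(x)/(4x) + o(1)$ in the joint regime $\alpha, x \to 0$, whence the theorem. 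The main obstacle is the uniform control of $R_\alpha(x)/(\alpha x)$ near the boundary, which requires combining a maximum-principle or Green-function estimate for the remainder equation with the integrability of $\psi v'$ against the neutral speed measure $dy/\sigma^2(y)$.
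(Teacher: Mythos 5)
Your argument is correct, and it reaches the result by a genuinely different route from the paper. The paper works entirely with the scale function: it linearizes $S(x)=x-2\alpha\int_0^x(x-y)\psi(y)\,dy+\mathcal O(\alpha^2)$ and $S'$, substitutes into the explicit formula $G^\alpha(x,y)=2\mathbf P_x(T_1<T_0)\,\tfrac{S(1)-S(y)}{\sigma^2(y)S'(y)}$ (and its mirror for $y\le x$), multiplies everything out to first order in $\alpha$, divides by $x$, lets $x\to 0$, and integrates over $y$. You instead perturb the Dirichlet problem $L_\alpha u_\alpha=-1$, which yields the hierarchy $L_0u_0=-1$, $L_0v=-\mu u_0'$; since $\mu=\psi\sigma^2$, the first-order correction is $v(x)=\int_0^1 2K(x,y)\psi(y)u_0'(y)\,dy$ with $K$ the Dirichlet Green kernel of $-\tfrac{d^2}{dx^2}$, and the final Fubini interchange turns $\tfrac12\int_0^1(1-y)\psi(y)u_0'(y)\,dy$ into the paper's double integral. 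The two computations are of course two faces of the same identity ($v=\int G^0(\cdot,y)\mu(y)u_0'(y)\,dy$), but yours avoids the page of algebra needed to expand products of scale-function expressions, isolates the $o(x)$ terms cleanly, and would survive in settings without a scale function; what it does \emph{not} deliver is the pointwise Green-function expansion \eqref{eq:green}, which the paper reuses verbatim for Theorems~\ref{T3} and~\ref{T4} and records as the refinement \eqref{eq:ref1}. Two small points. First, your reconciliation $A(x)-u_0(x)=2x\int_0^x y/\sigma^2(y)\,dy=o(x)$ is exactly the step that makes the theorem's subtracted quantity legitimate; note, though, that because of this discrepancy the \emph{iterated} limit as literally printed (first $\alpha\to0$ at fixed $x$, then $x\to0$) only makes sense if one reads the subtracted term as $\mathbf E^0_x[T]$ — this looseness is the paper's, not yours, and your ``joint regime'' reading is one consistent repair of it. Second, the uniform bound $R_\alpha=\mathcal O(\alpha^2)$ that you flag as the remaining obstacle is treated no more carefully in the paper (which simply carries $\mathcal O(\alpha^2)$ through the integrals), so you are not below its standard of rigor there; the maximum-principle estimate you sketch, using integrability of $\psi$ against the neutral speed measure, is the right way to close it.
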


\begin{corollary}[Expected hitting time -- unconditional case -- polynomial $\psi$\label{cor2}]
  Let $k=0,1,2,...$
  \begin{enumerate}
  \item If $\psi(x) = x^k$, 
    \begin{align}\label{uTb}
      \frac{\partial}{\partial \alpha}\mathbf E^\alpha_x[T]\big |_{\alpha=0}= 4x\int_0^1 \frac{1}{\sigma^2(y)} \Big(\frac{(1-y)y^{k+1}}{k+2} - \frac{y(1-y^k)}{(k+1)(k+2)}\Big) dy + o(x) \quad \mbox{ as } x\to 0\, .    
      \end{align}
    In particular, for $k=0$,
    \begin{align}\label{uTc}
      \frac{\partial}{\partial \alpha}\mathbf E^\alpha_x[T]\big |_{\alpha=0}= 2x \int_0^1 \frac{y(1-y)}{\sigma^2(y)}dy + o(x)\quad\mbox{ as } x\to 0\, 
    \end{align}
    and for $k=1$
    \begin{align}\label{uTl}
      \frac{\partial}{\partial \alpha}\mathbf E^\alpha_x[T]\big |_{\alpha=0}= \frac {2x}3 \int_0^1 \frac{y(1-y)(2y-1)}{\sigma^2(y)}dy + o(x)\quad\mbox{ as } x\to 0\, .
          \end{align}
  \item If $\psi(x) = (1-x)^k$, 
    \begin{align*}
      \frac{\partial}{\partial \alpha}\mathbf E^\alpha_x[T]\big |_{\alpha=0}= \frac{4x}{k+2} \int_0^1 \frac{1}{\sigma^2(y)}(1-y)(1-(1-y)^{k+1})dy + o(x) \quad\mbox{ as } x\to 0\, .
    \end{align*}
  \end{enumerate}
\end{corollary}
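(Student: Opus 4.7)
The plan is to obtain Corollary~\ref{cor2} by direct substitution of the monomial $\psi$ into the right-hand side of \eqref{eq:T2} in Theorem~\ref{T2}. The essential preliminary remark is that the subtracted quantity $2\int_0^x y/\sigma^2(y)\,dy + 2x\int_x^1 (1-y)/\sigma^2(y)\,dy$ is nothing but $\mathbf E^0_x[T]$, the neutral expected hitting time of $\{0,1\}$, as computed through the standard Green function for $dX=\sigma(X)dW$. Thus for fixed $x$, Theorem~\ref{T2} asserts $\lim_{\alpha\to 0}(\mathbf E^\alpha_x[T]-\mathbf E^0_x[T])/\alpha = 4x g(x)$, identifying $\partial_\alpha\mathbf E^\alpha_x[T]|_{\alpha=0}$ with $4x g(x)$, where $g(x)$ is the double-integral expression inside the outer limit. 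Combined with $g(x)\to G$ as $x\to 0$ (with $G$ the right-hand side of \eqref{eq:T2}), this yields $\partial_\alpha\mathbf E^\alpha_x[T]|_{\alpha=0} = 4xG + o(x)$. The task of the corollary is then just to evaluate $G$ for each monomial $\psi$.

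For part~(1), with $\psi(z) = z^k$, I would compute the inner integrals $\int_0^y(1-z)z^k\,dz = \frac{y^{k+1}}{k+1} - \frac{y^{k+2}}{k+2}$ and $\int_y^1(1-z)z^k\,dz = \frac{1}{(k+1)(k+2)} - \frac{y^{k+1}}{k+1} + \frac{y^{k+2}}{k+2}$. Substituting into the bracket $(1-y)\int_0^y(1-z)\psi(z)dz - y\int_y^1(1-z)\psi(z)dz$, the $y^{k+3}$-terms cancel, the two $y^{k+2}/(k+1)$-terms cancel, and placing what remains over the common denominator $(k+1)(k+2)$ gives $\frac{(k+2)y^{k+1}-(k+1)y^{k+2}-y}{(k+1)(k+2)}$, which is readily rewritten as $\frac{(1-y)y^{k+1}}{k+2} - \frac{y(1-y^k)}{(k+1)(k+2)}$, the integrand in \eqref{uTb}. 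The subcases $k=0,1$ then follow by elementary simplification: at $k=0$ the second term vanishes and the first reduces to $y(1-y)/2$, giving \eqref{uTc}; at $k=1$ one combines both terms over the common denominator $6$ to obtain $y(1-y)(2y-1)/6$, giving \eqref{uTl}.

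For part~(2), with $\psi(z) = (1-z)^k$, the substitution $u=1-z$ gives the cleaner primitives $\int_0^y(1-z)^{k+1}dz = \frac{1-(1-y)^{k+2}}{k+2}$ and $\int_y^1(1-z)^{k+1}dz = \frac{(1-y)^{k+2}}{k+2}$. The bracket then telescopes because the two summands carrying $(1-y)^{k+2}$ combine into $-(1-y)^{k+2}\bigl((1-y)+y\bigr)/(k+2) = -(1-y)^{k+2}/(k+2)$, leaving $\bigl((1-y) - (1-y)^{k+2}\bigr)/(k+2) = (1-y)\bigl(1-(1-y)^{k+1}\bigr)/(k+2)$. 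Dividing by $\sigma^2(y)$, integrating over $[0,1]$, and multiplying by $4x$ yields the stated formula.

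There is no genuine conceptual obstacle; this corollary is algebraic bookkeeping on top of Theorem~\ref{T2}. The only step that requires a moment's reflection is the identification of the $\alpha\to 0$ limit in Theorem~\ref{T2} with $\partial_\alpha\mathbf E^\alpha_x[T]|_{\alpha=0}$, and the most error-prone point is the telescoping in part~(1), where $y^{k+1}$, $y^{k+2}$ and $y^{k+3}$ contributions of opposite signs must be kept straight before collecting them into the compact form advertised in \eqref{uTb}.
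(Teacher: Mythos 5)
Your proposal is correct and follows essentially the same route as the paper: substitute the monomial $\psi$ into the right-hand side of \eqref{eq:T2}, evaluate the inner integrals $\int_0^y(1-z)z^k\,dz$ and $\int_y^1(1-z)z^k\,dz$ (respectively their $(1-z)^{k+1}$ analogues), and collect terms into the stated compact forms; your telescoping in part (1) and the combination $(1-y)+y=1$ in part (2) reproduce exactly the paper's algebra, and your preliminary identification of the subtracted quantity with the neutral expectation $\mathbf E^0_x[T]$ is the same (implicit) reading of Theorem~\ref{T2} that the paper relies on.
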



\bigskip
\noindent
We now study the conditional hitting times in two versions, which
directly lead to \eqref{ET1WF} and \eqref{ET2WF} in the
introduction. First, the process exits at the boundary point opposite
to where it entered, and second the process exits at the same boundary
point. Here
$\mathbf E_{0+}^{\alpha \ast}[.] := \lim_{x\to 0} \mathbf
E_x^\alpha[.|T_1<T_0]$ denotes the expectation under the measure of
the diffusion started in $0$ and conditioned to reach $1$, and
$\mathbf E_{x\ast}^{\alpha}[.] := \mathbf E_x^\alpha[.|T_0<T_1]$
denotes the expectation under the measure of the diffusion started in
$x$ and conditioned to reach $0$ before $1$.

\begin{theorem}[Expected hitting time - conditional case I\label{T3}]
  In the situation of Theorem~\ref{T1}, if all integrals exist, 
  \begin{equation}
    \label{eq:T3}
    \begin{aligned}
      \lim_{\alpha\to 0} \frac{1}{4\alpha}\Big(\mathbf E_{0+}^{\alpha
        \ast} [T_1] &- 2\int_0^1 \frac{(1-y)y}{\sigma^2(y)} dy\Big)
      \\&= \int_0^1 \frac 1{\sigma^2(y)} \Big( (1-y)^2 \int_0^y
      z\psi(z) dz -y^2 \int_y^1 (1-z)\psi(z) dz\Big) dy.
    \end{aligned}
  \end{equation}
 \end{theorem}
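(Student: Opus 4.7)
The plan is to express $\mathbf{E}_{0+}^{\alpha\ast}[T_1]$ via the scale function and speed measure of the $\alpha$-perturbed diffusion and then expand the resulting integral to first order in $\alpha$. Setting $s^\alpha(y) := \int_0^y \exp(-2\alpha\int_0^u \psi(v)\,dv)\,du$ (so $s^\alpha(0)=0$) and $m^\alpha(y) := 2/(\sigma^2(y)(s^\alpha)'(y))$, the function $g(x) := s^\alpha(x)/s^\alpha(1) = \mathbf{P}_x^\alpha(T_1<T_0)$ is $L^\alpha$-harmonic, and the Doob $g$-transform yields a diffusion with scale $-1/s^\alpha$ and speed density $(s^\alpha)^2 m^\alpha$. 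The classical hitting-time formula applied to this transformed process, followed by the limit $x\to 0+$ (under which $s^\alpha(x\wedge y)$ cancels against the normalizing denominator), leads to the compact expression
\[
\mathbf{E}_{0+}^{\alpha\ast}[T_1] \;=\; \int_0^1 s^\alpha(y)\Bigl(1 - \tfrac{s^\alpha(y)}{s^\alpha(1)}\Bigr)\,m^\alpha(y)\,dy,
\]
with the interchange of limit and integral justified by boundedness of $\mu = \psi\sigma^2$.

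Next I would perform a first-order Taylor expansion. Writing $\phi(y) := \int_0^y(y-z)\psi(z)\,dz$ and $\Psi(y) := \int_0^y\psi(z)\,dz$, Fubini gives $s^\alpha(y) = y - 2\alpha\phi(y) + O(\alpha^2)$, $m^\alpha(y) = (2/\sigma^2(y))(1 + 2\alpha\Psi(y)) + O(\alpha^2)$, and $1/s^\alpha(1) = 1 + 2\alpha\phi(1) + O(\alpha^2)$. Plugging these into the displayed integrand and collecting, the zeroth-order term is $2y(1-y)/\sigma^2(y)$, whose integral equals the constant $2\int_0^1 y(1-y)/\sigma^2(y)\,dy$ subtracted on the left-hand side of \eqref{eq:T3}. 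The coefficient of $\alpha$ in the integrand is $(4/\sigma^2(y))\,E(y)$, where
\[
E(y) := y(1-y)\Psi(y) - y^2\phi(1) - (1-2y)\phi(y).
\]

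The remaining step, and the main obstacle, is to verify the algebraic identity
\[
E(y) \;=\; (1-y)^2\!\int_0^y z\psi(z)\,dz - y^2\!\int_y^1(1-z)\psi(z)\,dz.
\]
Using $\phi(y) = y\Psi(y) - \int_0^y z\psi(z)\,dz$ to eliminate $\Psi$ from $E(y)$ leaves $y^2(\Psi(y) - \phi(1)) + (1-2y)\int_0^y z\psi(z)\,dz$; splitting $\phi(1) = \int_0^y(1-z)\psi(z)\,dz + \int_y^1(1-z)\psi(z)\,dz$ rewrites $\Psi(y)-\phi(1)$ as $\int_0^y z\psi(z)\,dz - \int_y^1(1-z)\psi(z)\,dz$, after which the coefficients $y^2 + (1-2y) = (1-y)^2$ and $-y^2$ fall into place. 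This regrouping is the only genuinely delicate step; once it is in hand, dividing the expansion by $4\alpha$ and sending $\alpha\to 0$ gives the claimed limit, with the $O(\alpha^2)$ remainder uniformly controlled by the standing boundedness hypothesis on $\mu$.
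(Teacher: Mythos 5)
Your proposal is correct and follows essentially the same route as the paper: your formula $\mathbf{E}_{0+}^{\alpha\ast}[T_1]=\int_0^1 s^\alpha(y)\bigl(1-s^\alpha(y)/s^\alpha(1)\bigr)m^\alpha(y)\,dy$ is exactly the paper's conditioned Green function \eqref{eq:8314} evaluated at $x\to 0+$, which the paper quotes as standard rather than rederiving via the Doob transform, and your first-order expansion of $S$, $S'$ and the normalization, together with the regrouping $E(y)=(1-y)^2\int_0^y z\psi(z)\,dz-y^2\int_y^1(1-z)\psi(z)\,dz$, reproduces the chain of equalities in \eqref{eq:dsj}. The algebraic identity you single out as the delicate step checks out.
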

\bigskip
\begin{corollary}[Expected hitting time -- conditional case I -- polynomial
  $\psi$\label{cor3}]
  Let $k=0,1,2,...$
  \begin{enumerate}
  \item If $\psi(x) = x^k$,
    \begin{align}\label{eq0:cor3}
      \frac{\partial}{\partial \alpha} \mathbf E_{0+}^{\alpha \ast} 
        [T_1] \big |_{\alpha=0}
      = 4\int_0^1 \frac{1}{\sigma^2(y)} \Big(\frac{(1-y)y^{k+2}}{k+2} - \frac{y^2(1-y^{k+1})}{(k+1)(k+2)}\Big)dy.
    \end{align}
    In particular, for $k=0$,
    \begin{align}\label{eq:cor3}
     \frac{\partial}{\partial \alpha} \mathbf E_{0+}^{\alpha \ast} 
        [T_1] \big |_{\alpha=0} = 0.
    \end{align}
    and for $k=1$
    \begin{align}\label{eq2:cor3}
     \frac{\partial}{\partial \alpha} \mathbf E_{0+}^{\alpha \ast} 
        [T_1] \big |_{\alpha=0} = -\frac 23 \int_0^1 \frac{y^2(1-y)^2}{\sigma^2(y)}dy.
    \end{align}
  \item If $\psi(x) = (1-x)^k$,
    \begin{align*}
      \frac{\partial}{\partial \alpha} \mathbf E_{0+}^{\alpha \ast} 
        [T_1] \big |_{\alpha=0} =
       -4 \int_0^1 \frac{1}{\sigma^2(y)} \Big(\frac{y(1-y)^{k+2}}{k+2} - 
           \frac{(1-y)^2(1-(1-y)^{k+1})}{(k+1)(k+2)}\Big)dy.
    \end{align*}
  \end{enumerate}
\end{corollary}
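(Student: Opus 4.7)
The plan is to apply Theorem \ref{T3} directly: since that theorem asserts
\[
  \mathbf E_{0+}^{\alpha \ast}[T_1]
  = 2\int_0^1 \frac{(1-y)y}{\sigma^2(y)}\,dy + 4\alpha \cdot I(\psi) + o(\alpha), \qquad \alpha\to 0,
\]
where $I(\psi) := \int_0^1 \sigma^{-2}(y)\bigl((1-y)^2\int_0^y z\psi(z)\,dz - y^2\int_y^1(1-z)\psi(z)\,dz\bigr)dy$, the $\alpha$-derivative at $0$ is simply $4I(\psi)$. So the whole exercise reduces to evaluating $I(\psi)$ for $\psi(z)=z^k$ and $\psi(z)=(1-z)^k$, then simplifying the inner expression to match the form stated in the Corollary.

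For part (1), I would substitute $\psi(z)=z^k$ and use the elementary antiderivatives
\[
  \int_0^y z^{k+1}\,dz = \frac{y^{k+2}}{k+2}, \qquad \int_y^1 (1-z)z^k\,dz = \frac{1-y^{k+1}}{k+1} - \frac{1-y^{k+2}}{k+2}.
\]
Plugging these into $I(\psi)$ gives, after placing everything over the common denominator $(k+1)(k+2)$, a numerator proportional to
\[
  (k+1)y^{k+2} - k y^{k+3} - y^2,
\]
which is exactly $(k+1)(1-y)y^{k+2} - y^2(1-y^{k+1})$, yielding \eqref{eq0:cor3}. The special cases follow by inspection: for $k=0$ the bracket $(1-y)y^2/2 - y^2(1-y)/2$ vanishes, giving \eqref{eq:cor3}; for $k=1$ a short expansion collapses to $-y^2(1-y)^2/6$, giving \eqref{eq2:cor3}.

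For part (2), I would present two routes and pick whichever reads more cleanly. The direct route substitutes $\psi(z)=(1-z)^k$ and computes $\int_y^1(1-z)^{k+1}dz = (1-y)^{k+2}/(k+2)$ together with $\int_0^y z(1-z)^k dz$ via $u=1-z$; the algebra then parallels part (1), with an overall sign flip producing the minus sign in the statement. The symmetry route is conceptually shorter: the change of variable $y\mapsto 1-y$ in $I(\psi)$ swaps the roles of the two inner integrals and of $(1-y)^2$ with $y^2$, so that $I((1-\cdot)^k)$ equals $-I(\cdot^k)$ read against the $y\mapsto 1-y$ measure; renaming gives the claimed formula.

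Nothing here is a real obstacle --- the only mild bookkeeping hazard is tracking which of $(k+1)$ and $(k+2)$ ends up in the denominator of each term when combining $\frac{1-y^{k+1}}{k+1}-\frac{1-y^{k+2}}{k+2}$ with the $(1-y)^2 y^{k+2}/(k+2)$ piece, and making sure the quartic cancellations that produce the clean numerator $(k+1)y^{k+2}-ky^{k+3}-y^2$ are carried out correctly. Once that identity is in hand, both parts of the corollary, as well as the two highlighted special cases, follow.
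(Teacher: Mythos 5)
Your proposal is correct and follows essentially the same route as the paper: part (1) by direct substitution of $\psi(z)=z^k$ into the right-hand side of Theorem~\ref{T3} and elementary integration (the numerator identity $(k+1)y^{k+2}-ky^{k+3}-y^2=(k+1)(1-y)y^{k+2}-y^2(1-y^{k+1})$ checks out), and part (2) by the symmetry $y\mapsto 1-y$, $z\mapsto 1-z$, which is exactly the change of variables the paper uses.
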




\noindent
We note that -- according to (11) in \cite{Griffiths2003} -- the
expected age of an allele which is at frequency $x$ is given by
$a(x) = \mathbf E^\alpha_{x\ast}[T_0]$.
Therefore, our next result is also a statement on average ages of
alleles for low levels of selection.

\begin{theorem}[Expected hitting time --
  conditional case II\label{T4}]
  In the situation of Theorem~\ref{T1}, and if all integrals exist
  \begin{equation}
    \label{eq:T4}
    \begin{aligned}
      \lim_{\alpha\to 0} & \frac{1}{4\alpha} \Big(\frac{1}{x} \mathbf
      E^\alpha_{x\ast}[T_0] - \frac 2x \int_0^x \frac{y}{\sigma^2(y)}
      dy - 2\int_x^1 \frac{(1-y)^2}{\sigma^2(y)}dy\Big) \\ &
      \xrightarrow{x\to 0}\int_0^1 \frac{1}{\sigma^2(y)} \Big(
      (1-y)^2 \int_0^y (1-2z)\psi(z)dz - 2 y(1-y)\int_y^1(1-z)
      \psi(z)dz\Big) dy.
    \end{aligned}
  \end{equation}
\end{theorem}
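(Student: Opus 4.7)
The plan is to follow the pattern of the preceding theorems: represent $\mathbf E^\alpha_{x\ast}[T_0]$ as a Green integral in the scale function and speed density of~\eqref{eq:SDE}, expand to first order in $\alpha$, and then take $x\to 0$. Let $s$ denote the scale function of \eqref{eq:SDE} normalised by $s(0)=0$, so that $s'(y)=\exp\bigl(-2\alpha\int_0^y \psi(z)\,dz\bigr)$; set $q(y) := s(y)/s(1) = \mathbf P^\alpha_y(T_1<T_0)$ and $p(y):= 1-q(y)$. Then $v(x) := \mathbf E^\alpha_x[T\,\mathbf 1_{T_0<T_1}]$ is the Dirichlet solution on $(0,1)$ of $L_\alpha v = -p$ with $L_\alpha = \alpha\mu\partial_x + \tfrac12 \sigma^2 \partial_{xx}$, and the standard Green function representation yields
\begin{equation*}
    \mathbf E^\alpha_{x\ast}[T_0] = \frac{v(x)}{p(x)} = 2 s(1)\int_0^x \frac{q(y)p(y)}{\sigma^2(y)s'(y)}\,dy + \frac{2 s(1) q(x)}{p(x)}\int_x^1 \frac{p(y)^2}{\sigma^2(y) s'(y)}\,dy.
\end{equation*}

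Next I Taylor expand in $\alpha$ about $0$. Writing $\Psi(y) := \int_0^y \psi$, $A(y) := \int_0^y \Psi$ and $a(y) := y A(1) - A(y)$ (which is the function already appearing on the r.h.s.\ of Theorem~\ref{T1}), one obtains $s'(y) = 1 - 2\alpha\Psi(y) + O(\alpha^2)$, $s(y) = y - 2\alpha A(y) + O(\alpha^2)$, $q(y) = y + 2\alpha a(y) + O(\alpha^2)$ and $p(y) = (1-y) - 2\alpha a(y) + O(\alpha^2)$. Dividing the displayed formula by $x$ and differentiating in $\alpha$ at $0$, then dividing by $4$, produces five contributions: an $A(1)$-weighted copy of the zeroth-order integrand on $(0,x)$, the $a$- and $\Psi$-weighted $\alpha$-derivative of that integrand, two $A(1)$- and $a(x)$-dependent prefactors multiplying $\int_x^1 (1-y)^2/\sigma^2(y)\,dy$ (coming from $\partial_\alpha[s(1)q(x)/p(x)]|_{\alpha=0}$), and the $\alpha$-derivative of the integrand on $(x,1)$ scaled by $1/(1-x)$.

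The decisive step is the passage $x\to 0$, which requires grouping these terms because several of them diverge individually (for example, $\int_0^1 (1-y)^2/\sigma^2(y)\,dy$ is divergent in the Wright--Fisher case). Since $A(y)/y \to \Psi(0) = 0$ and hence $a(y)/y \to A(1)$, the two integrals on $(0,x)$ scaled by $1/x$ have opposite limits of the form $\pm A(1) \cdot L$, with $L := \lim_{y\to 0^+} y(1-y)/\sigma^2(y)$, and therefore cancel. The combined prefactor of $\int_x^1 (1-y)^2/\sigma^2(y)\,dy$ simplifies to $\frac{A(1)x^2 - A(x)}{x(1-x)^2}$, which is of order~$x$ as $x\to 0$ (since $A(x)=O(x^2)$) and annihilates any logarithmic divergence of that integral. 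What survives is
\begin{equation*}
    \int_0^1 \frac{(1-y)^2 \Psi(y) - 2(1-y) a(y)}{\sigma^2(y)}\,dy,
\end{equation*}
and the algebraic identity $(1-y)^2 \Psi(y) - 2(1-y)a(y) = (1-y)^2\int_0^y (1-2z)\psi(z)\,dz - 2y(1-y)\int_y^1 (1-z)\psi(z)\,dz$, obtained from $A(y) = y\Psi(y) - \int_0^y z\psi(z)\,dz$ and $A(1) = \int_0^1 (1-z)\psi(z)\,dz$ by expansion and collection of terms, identifies this with the r.h.s.\ of~\eqref{eq:T4}.

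The main obstacle is precisely this bookkeeping: without the careful grouping just described, the $x \to 0$ limits of the individual terms are infinite. The standing assumption that ``all integrals exist'' is what guarantees absolute convergence of the surviving integral and the validity of the limit. The interchange of $\partial_\alpha$ with the integrals in the Green representation is routine, since $s$, $s'$, $q$, $p$ are smooth functions of $\alpha$ uniformly on $[0,1]$, exactly as in the proofs of Theorems~\ref{T1}--\ref{T3}.
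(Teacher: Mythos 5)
Your proposal is correct and follows essentially the same route as the paper's proof: both express $\mathbf E^\alpha_{x\ast}[T_0]$ via the Green function of the diffusion conditioned on $\{T_0<T_1\}$ (your formula $2s(1)\int_0^x \frac{qp}{\sigma^2 s'} + \frac{2s(1)q(x)}{p(x)}\int_x^1\frac{p^2}{\sigma^2 s'}$ is exactly the paper's \eqref{eq:8315} combined with \eqref{eq:8313}), expand the scale function to first order in $\alpha$, and only then let $x\to 0$ after grouping the individually divergent pieces. Your bookkeeping via $\Psi$, $A$, $a$ and the closing identity $(1-y)^2\Psi(y)-2(1-y)a(y)=(1-y)^2\int_0^y(1-2z)\psi(z)\,dz-2y(1-y)\int_y^1(1-z)\psi(z)\,dz$ check out and reproduce the paper's \eqref{eq:818}--\eqref{eq:819} in slightly different notation.
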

\bigskip
\begin{corollary}[Expected hitting time -- conditional case II -- polynomial
  $\psi$\label{cor4}]
  Let $k=0,1,2,...$
  \begin{enumerate}
  \item If $\psi(x) = x^k$,
    \begin{align}\label{eq0:cor4}
      \frac{\partial}{\partial \alpha} \mathbf E_{x\ast}^{\alpha} 
      [T_0] \big |_{\alpha=0}
      = 
      4x\int_0^1 \frac{y(1-y)}{\sigma^2(y)} \Big(\frac{k(1-y^{k+1})}{(k+1)(k+2)} 
      - \frac{1 - y^k}{k+1}\Big)dy + o(x) \qquad \text{ as $x\to 0$}.
    \end{align}
    In particular, for $k=0$,
    \begin{align}\label{eq:cor4}
      \frac{\partial}{\partial \alpha} \mathbf E_{x\ast}^{\alpha} 
      [T_0] \big |_{\alpha=0} = o(x) \qquad \text{ as $x\to 0$}
    \end{align}
    and for $k=1$
    \begin{align}\label{eq2:cor4}
      \frac{\partial}{\partial \alpha} \mathbf E_{x\ast}^{\alpha} 
      [T_0] \big |_{\alpha=0} = -\frac 23 x\int_0^1 
      \frac{(1-y)^2(1-(1-y)^2)}{\sigma^2(y)}dy  + o(x) \qquad \text{ as $x\to 0$}.
    \end{align}
  \item If $\psi(x) = (1-x)^k$,
    \begin{align*}
      \frac{\partial}{\partial \alpha} \mathbf E_{x\ast}^{\alpha} 
      [T_0] \big |_{\alpha=0} =
      4 x\int_0^1 \frac{(1-y)^2(1-(1-y)^{k+1})}{\sigma^2(y)}\frac{k}{(k+1)(k+2)}dy + o(x) \qquad \text{ as $x\to 0$}.
    \end{align*}
  \end{enumerate}
\end{corollary}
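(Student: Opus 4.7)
The plan is to read off Corollary~\ref{cor4} from Theorem~\ref{T4} by carrying out the elementary integrations for $\psi(z)=z^k$ and $\psi(z)=(1-z)^k$. First I will observe that the neutral case $\alpha=0$ gives $\mathbf E_{x\ast}^0[T_0]=2\int_0^x\frac{y}{\sigma^2(y)}dy+2x\int_x^1\frac{(1-y)^2}{\sigma^2(y)}dy$ (the classical formula, forced anyway by the existence of the limit in~\eqref{eq:T4}), so the bracket on the left of~\eqref{eq:T4} vanishes at $\alpha=0$. Consequently Theorem~\ref{T4} can be rewritten as
\[
\frac{\partial}{\partial\alpha}\mathbf E^\alpha_{x\ast}[T_0]\Big|_{\alpha=0}=4x\,I(\psi)+o(x)\quad\text{as }x\to 0,
\]
where $I(\psi)$ denotes the right-hand side integral of~\eqref{eq:T4}. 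The task reduces to evaluating $I(\psi)$ in closed form for each of the two monomial families.

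For part~(i), with $\psi(z)=z^k$, I will use the elementary primitives $\int_0^y(1-2z)z^k dz=\frac{y^{k+1}}{k+1}-\frac{2y^{k+2}}{k+2}$ and $\int_y^1(1-z)z^k dz=\frac{1}{(k+1)(k+2)}-\frac{y^{k+1}}{k+1}+\frac{y^{k+2}}{k+2}$. Substituting into the integrand of~\eqref{eq:T4} and regrouping, the $y^{k+1}$--coefficient combines via $(1-y)^2+2y(1-y)=(1-y)(1+y)$ and the $y^{k+2}$--coefficient via $-2(1-y)^2-2y(1-y)=-2(1-y)$. After pulling out $\frac{y(1-y)}{(k+1)(k+2)}$, the remaining bracket is $(k+2)y^k-ky^{k+1}-2$, which equals $k(1-y^{k+1})-(k+2)(1-y^k)$ by a one-line algebraic check. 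This identifies the integrand with the one appearing in~\eqref{eq0:cor4}. The special cases $k=0$ (bracket $\equiv 0$) and $k=1$ (bracket $=(1-y)(y-2)$, so that $1-(1-y)^2=y(2-y)$ appears after regrouping) then yield \eqref{eq:cor4} and \eqref{eq2:cor4} by direct substitution.

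For part~(ii), the same recipe with $\psi(z)=(1-z)^k$ gives $\int_y^1(1-z)^{k+1}dz=\frac{(1-y)^{k+2}}{k+2}$ and, via the substitution $w=1-z$, $\int_0^y(1-2z)(1-z)^k dz=\frac{k}{(k+1)(k+2)}+\frac{(1-y)^{k+1}}{k+1}-\frac{2(1-y)^{k+2}}{k+2}$. Inserting into~\eqref{eq:T4}, the $(1-y)^{k+3}$-- and $(1-y)^{k+4}$--contributions combine --- in exact mirror of the $y^{k+1}/y^{k+2}$ cancellation in part~(i) --- to $-\frac{k(1-y)^{k+3}}{(k+1)(k+2)}$, leaving $\frac{k(1-y)^2(1-(1-y)^{k+1})}{(k+1)(k+2)}$ in the integrand, which is precisely the statement of~(ii). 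The only real obstacle throughout is the bookkeeping of powers and signs in these polynomial identities; no probabilistic or analytic input is required beyond Theorem~\ref{T4}.
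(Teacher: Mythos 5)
Your proposal is correct and follows essentially the same route as the paper: both proofs simply evaluate the right-hand side of Theorem~\ref{T4} in closed form for $\psi(z)=z^k$ and $\psi(z)=(1-z)^k$ by elementary integration, and your polynomial identities (in particular the bracket $(k+2)y^k-ky^{k+1}-2=k(1-y^{k+1})-(k+2)(1-y^k)$) check out. The only cosmetic difference is in part~(ii), where the paper recycles the part~(i) computation via the reflection $y\mapsto 1-y$, $z\mapsto 1-z$, whereas you integrate directly and collect powers of $(1-y)$; both yield the same integrand $\frac{k}{(k+1)(k+2)}\,\frac{(1-y)^2(1-(1-y)^{k+1})}{\sigma^2(y)}$.
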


Another classical application of diffusion theory in population
genetics is the frequency spectrum. The underlying idea is that new
mutations arise with an intensity $\theta dt$ (more precisely, in the
limit $\varepsilon\downarrow 0$, a Poisson stream of immigrants of
frequency $\varepsilon$ comes in with immigration intensity
$\tfrac\theta\varepsilon dt$). The frequency path of each mutation
follows the diffusion $X$. Then, let $\theta f^\alpha(x)dx$ be the
expected number of diffusion paths in frequency $x$. According to
Theorem 7.20 of \cite{Durrett2008}, in our situation this is given by
$$ f^\alpha(x) := \frac{e^{2 \alpha \int_0^x \psi(y) dy}}{\sigma^2(x)} \mathbf P^\alpha_x(T_0<T_1).$$
This function we study next.

\begin{theorem}[Frequency spectrum\label{T5}]
  The function $f^\alpha$ defined above satisfies
  \begin{align*}
    \lim_{\alpha\to 0} \frac{1}{2\alpha}\Big(f^\alpha(x) - \frac{1-x}{\sigma^2(x)} \Big) = \frac{x}{\sigma^2(x)}\Big(\frac 1x \int_0^x (1-y)\psi(y)dy - \int_0^1(1-y)\psi(y) dy\Big).
  \end{align*}
\end{theorem}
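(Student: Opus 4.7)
The plan is to expand the three factors in $f^\alpha(x) = \sigma^{-2}(x)\,e^{2\alpha\int_0^x\psi(y)dy}\,\mathbf P^\alpha_x(T_0<T_1)$ separately to first order in $\alpha$ and multiply. Since $\sigma^{-2}(x)$ is independent of $\alpha$, only the exponential and the fixation probability require expansion. The exponential is straightforward: $e^{2\alpha\int_0^x\psi(y)dy} = 1 + 2\alpha\int_0^x\psi(y)dy + O(\alpha^2)$. The fixation probability is exactly what Theorem~\ref{T1} controls.

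Writing $\mathbf P^\alpha_x(T_0<T_1) = 1 - \mathbf P^\alpha_x(T_1<T_0)$ and invoking Theorem~\ref{T1}, I obtain
$$\mathbf P^\alpha_x(T_0<T_1) = (1-x) - 2\alpha\Big(x\int_0^1 (1-y)\psi(y)dy - \int_0^x (x-y)\psi(y)dy\Big) + o(\alpha).$$
Multiplying the two expansions and collecting the coefficient of $2\alpha$ in $\sigma^2(x)f^\alpha(x) - (1-x)$ produces
$$(1-x)\int_0^x \psi(y)dy \;-\; x\int_0^1(1-y)\psi(y)dy \;+\; \int_0^x(x-y)\psi(y)dy.$$

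The final step is an algebraic identity. Combining the first and third integrals and using $(1-x)+(x-y) = 1-y$ collapses them to $\int_0^x(1-y)\psi(y)dy$, so the coefficient of $2\alpha$ becomes
$$\int_0^x (1-y)\psi(y)dy - x\int_0^1 (1-y)\psi(y)dy,$$
which, after dividing by $\sigma^2(x)$ and factoring out $x/\sigma^2(x)$, matches the right-hand side of the theorem.

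The entire argument reduces to a Taylor expansion combined with Theorem~\ref{T1} and a one-line rearrangement of integrals, so there is no genuine obstacle. The only point worth noting is that the $o(\alpha)$ remainder in Theorem~\ref{T1} is used pointwise at the fixed argument $x$ at which $f^\alpha$ is evaluated, which is precisely what that theorem provides.
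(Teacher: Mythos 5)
Your argument is correct and is essentially identical to the paper's own proof: both expand the exponential to first order, substitute the first-order expansion of $\mathbf P^\alpha_x(T_0<T_1)$ from Theorem~\ref{T1}, and collapse $(1-x)\int_0^x\psi(y)dy+\int_0^x(x-y)\psi(y)dy$ into $\int_0^x(1-y)\psi(y)dy$ via $(1-x)+(x-y)=1-y$. No further comment is needed.
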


\begin{corollary}[Frequency spectrum -- polynomial $\psi$\label{cor5}]
  Let $k=0,1,2,...$
  \begin{enumerate}
  \item If $\psi(x) = x^k$,
    \begin{align}\label{eq0:cor5}
      \frac{\partial}{\partial \alpha} f^\alpha(x)\big |_{\alpha=0}
      = - \frac{2x}{\sigma^2(x)} \frac{1 - x^k(1 + (1-x)(k+1))}{(k+1)(k+2)}.
    \end{align}
    In particular, for $k=0$,
    \begin{align}\label{eq:cor5}
      \frac{\partial}{\partial \alpha} f^\alpha(x) \big |_{\alpha=0} = \frac{x(1-x)}{\sigma^2(x)}
    \end{align}
    and for $k=1$
    \begin{align}\label{eq2:cor5}
      \frac{\partial}{\partial \alpha} f^\alpha(x) \big |_{\alpha=0} = -\frac{x(1-x)(1-2x)}{3\sigma^2(x)}
    \end{align}
  \item If $\psi(x) = (1-x)^k$,
    \begin{align*}
      \frac{\partial}{\partial \alpha} f^\alpha(x)\big |_{\alpha=0}
      = - \frac{2x}{\sigma^2(x)} \frac{1}{k+2}\Big(1-\frac 1x (1-(1-x)^{k+2}) \Big)
    \end{align*}
  \end{enumerate}
\end{corollary}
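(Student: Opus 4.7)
The plan is to invoke Theorem~\ref{T5} and simply evaluate the resulting two elementary integrals for the two monomial families $\psi(y)=y^k$ and $\psi(y)=(1-y)^k$, then cross-check the $k=0$ and $k=1$ cases by direct substitution. Note first that at $\alpha=0$ we have $f^0(x)=(1-x)/\sigma^2(x)$, so Theorem~\ref{T5} gives
\begin{equation*}
\frac{\partial}{\partial\alpha}f^\alpha(x)\big|_{\alpha=0}
=\frac{2x}{\sigma^2(x)}\Bigl(\frac{1}{x}\int_0^x(1-y)\psi(y)\,dy-\int_0^1(1-y)\psi(y)\,dy\Bigr).
\end{equation*}

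For part~(1), with $\psi(y)=y^k$, I would compute
\begin{equation*}
\int_0^x(1-y)y^k\,dy=\frac{x^{k+1}}{k+1}-\frac{x^{k+2}}{k+2},
\qquad
\int_0^1(1-y)y^k\,dy=\frac{1}{(k+1)(k+2)},
\end{equation*}
plug in, and combine over the common denominator $(k+1)(k+2)$. The numerator becomes $(k+2)x^k-(k+1)x^{k+1}-1=x^k\bigl(1+(1-x)(k+1)\bigr)-1$, which is exactly the negative of the expression in \eqref{eq0:cor5}. The specialisations \eqref{eq:cor5} and \eqref{eq2:cor5} then fall out by setting $k=0$ and $k=1$; for $k=1$ one verifies the factorisation $-(2x^2-3x+1)=(1-2x)(1-x)$ to reach $-x(1-x)(1-2x)/(3\sigma^2(x))$.

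For part~(2), with $\psi(y)=(1-y)^k$, the two integrals reduce to $\int_0^x(1-y)^{k+1}dy=(1-(1-x)^{k+2})/(k+2)$ and $\int_0^1(1-y)^{k+1}dy=1/(k+2)$; substituting into the master formula and factoring out $1/(k+2)$ immediately yields the displayed expression.

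There is no genuine obstacle: once Theorem~\ref{T5} is available the corollary is a mechanical evaluation of polynomial integrals followed by algebraic tidying. The only place to be careful is the sign bookkeeping when rewriting $(k+2)x^k-(k+1)x^{k+1}-1$ in the form $-(1-x^k(1+(1-x)(k+1)))$ used in \eqref{eq0:cor5}, and the analogous rewriting in the $(1-y)^k$ case; both are a one-line check.
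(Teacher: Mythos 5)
Your proposal is correct and follows exactly the paper's own proof: the Appendix likewise just evaluates $\tfrac1x\int_0^x(1-y)\psi(y)\,dy-\int_0^1(1-y)\psi(y)\,dy$ for $\psi(y)=y^k$ and $\psi(y)=(1-y)^k$ and multiplies by $2x/\sigma^2(x)$ from Theorem~\ref{T5}. The only blemish is the parenthetical identity in your $k=1$ check, which should read $2x^2-3x+1=(1-2x)(1-x)$ (equivalently $3x-2x^2-1=-(1-x)(1-2x)$); the final expression $-x(1-x)(1-2x)/(3\sigma^2(x))$ you state is nonetheless correct.
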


\section{Applications in Evolutionary Game Theory}
\label{sec3}
Limits of weak selection and large population size have been studied also in evolutionary game theory; see e.g.\ \cite{SampleAllen2017} for a recent discussion on
this topic. These limit arises as follows: Each individual in the
population has a certain genotype. At some high rate, pairs of
individuals are chosen at random, and the first individual imposes its
genotype upon the second. In addition to these {\em neutral} events,
which in the limit of large populations would give rise to a
Wright-Fisher diffusion, also {\em selective} events happen at a lower
rate. For this, consider two different strategies $S_1$ and $S_2$ such
that genotype $g$ has a probability $p_g$ to play strategy $S_1$ and
$1-p_g$ to play strategy $S_2$.

In order to determine the {\em fitness} of a genotype, consider the
payoff matrix
\begin{center}
  \begin{tabular}{c|cc}
    & $S_1$ & $S_2$ \\\hline
    $S_1$ & $a$ & $b$ \\
    $S_2$ & $c$ & $d$
  \end{tabular}
\end{center}
The absolute fitness of genotype $g$ is then proportional to the
average payoff it receives upon playing against a random individual
from the population.

\subsection{Evolutionary games in haploid populations}
In haploid populations, assume that there are two genotypes, $A$ and
$B$, and $A$ always plays strategy $S_1$ whereas $B$-individuals play
$S_2$. Then, if $x$ is the frequency of $A$-alleles, the fitness of
any $A$-individual is $1 + \alpha(xa + (1-x)b)$, since this individual
receives payoff $a$ if it plays against $S_1$ and $b$ if it plays
against $S_2$. By the same argument, the fitness of a $B$-individual
is $1 + \alpha(xc + (1-x)d)$. In the diffusion limit that is familiar
from population genetics (see e.g. \citealp{Ewens2004}), the relative
frequency $X$ of type $A$-individuals then follows the dynamics
\begin{equation}
  \label{eq:haploid}
  \begin{aligned}
    dX & = \alpha X(1-X)(Xa + (1-X)b - Xc - (1-X)d)dt + \sqrt{X(1-X)}dW
    \\ & = \alpha X(1-X)(\beta - \gamma X)dt + \sqrt{X(1-X)}dW
  \end{aligned}
\end{equation}
for $\beta = b-d, \gamma = b-d+c-a$. 

Alternatively, argue as follows: Each (ordered) pair of $A$
individuals plays ``selective encounters'' against each other at rate
$\alpha a$, and the first individual has an offspring which replaces a
randomly chosen individual from the population. At rate $\alpha b$, a
pair $(A,B)$ does the same, as well as at rate $\alpha c$ for $(B,A)$,
and a pair $(B,B)$ does the same at rate $\alpha d$. Using this model
we see that the frequency of $A$ increases if the first individual in
the playing pair is $A$ and the replaced individual is
$B$. Conversely, the frequency of $A$ decreases if the first
individual in the pair is $B$ and the replaced is $A$. Again, in the
appropriate scaling limit, this gives rise to \eqref{eq:haploid}.

In this situation, we can apply Theorems~\ref{T1}, \ref{T2}, \ref{T3},
\ref{T4}, \ref{T5} and their corollaries to obtain \eqref{eq:change},
\eqref{ETWF}, \eqref{ET1WF}, \eqref{ET2WF} and \eqref{ET3WF}.  Note
that we have $\sigma^2(y) = y(1-y)$ and $\psi(z) = \beta - \gamma
z$. Since all right hand sides of Theorems~\ref{T1}--\ref{T5} are
linear in $\psi$, we can directly use Corollaries \ref{cor1},
\ref{cor2}, \ref{cor3}, \ref{cor4} and \ref{cor5} and sum $\beta$
times the term for $\mu(x) = \sigma^2(x)$ and $-\gamma$ times the term
for $\mu(x) = x\sigma^2(x)$. This directly shows our claims.

In the limit $x\to0$, \eqref{eq:change} is the classical one-third
rule by \cite{Nowak2004}.  Moreover, note that the right hand side of
the unconditioned expectation in \eqref{ETWF} does not depend on
$\gamma$ (compare with (22) in \citealp{Altrock2009}), while the right
hand side of \eqref{ET1WF} does not depend on $\beta$ (compare with
(24) in \citealp{Altrock2009}). In particular, for small $\alpha$, we
see that in the unconditioned case the selected allele fixes slower
than a neutral allele iff $\beta>0$, while conditional on fixation,
fixation is slower iff $\gamma>0$.

\subsection{Evolutionary games in diploid populations}
In \cite{Hashimoto2009}, evolutionary games in a diploid population
were studied. Here, genotypes $AA, AB$ and $BB$ are formed from the
haploids using the Hardy-Weinberg equilibrium (i.e.\ genotype $AA$ has
a frequency of $x^2$, if $A$ has frequency $x$, etc.) For computing the
fitness, we consider two different cases. In the case of a dominant
$A$ allele, we have that  $AA$ as well as $AB$ play strategy $S_1$ and
$BB$ plays strategy $S_2$. 

For a dominant $A$-allele, the fitness advantage of $A$ is then
computed by assuming that it forms a genotype by randomly choosing a
mate and then having an average payoff
$\alpha((1-x)^2 b + (1-(1-x)^2)a)$.  For $B$, the same argument leads
to
$\alpha(x((1-x)^2 b + (1-(1-x)^2)a) + (1-x)((1-x)^2 d +
(1-(1-x)^2c))$.  (The $B$ allele can form $AB$ with probability $x$
and play strategy $S_1$ or form $BB$ with probability $1-x$ and play
strategy $S_2$.) In total, we find that in the diffusion limit the
frequency of $A$ follows
\begin{align*}
  dX & = \alpha X(1-X)^2 ((1-X)^2 b + (1-(1-X)^2)a - (1-X)^2 d - (1-(1-X)^2)c)dt \\ &\phantom{AAAAAAAAAAAAAAAAAAAAAAAAAAAAAAAAAAAAAAAAA} + \sqrt{X(1-X)}dW
  \\ & = \alpha X(1-X)^2 (\beta - \gamma (1-X)^2) dt + \sqrt{X(1-X)}dW
\end{align*}
for $\beta = a-c, \gamma = a-c+d-b$. Plugging
$\psi(x) = (1-x)(\beta - \gamma(1-x)^2)$ into Corollaries \ref{cor1},
\ref{cor2}, \ref{cor3}, \ref{cor4} and \ref{cor5}, straightforward
calculations give
\begin{align}\label{eq:dipDom1}
  \frac{\partial}{\partial \alpha}\mathbf P^\alpha_x(T_1 \leq T_0)\big |_{\alpha=0}  &= \frac 23x \Big(\beta - \frac 35 \gamma\Big)+o(x)\quad \mbox{ as }x \to 0,
  \\ \label{eq:dipDom2}  \frac{\partial}{\partial \alpha}\mathbf E^\alpha_x[T] \big |_{\alpha=0}
                                                                                     &= \frac{1}{3}x (6 \beta - 5 \gamma)+o(x)\quad \mbox{ as }x \to 0,
  \\\label{eq:dipDom3}
  \frac{d}{d \alpha}\mathbf E_{0+}^{\alpha \ast}[T]\big |_{\alpha=0}&= \frac{1}{3}\Big(\frac 13 \beta - \frac{29}{100}\gamma\Big),
  \\\label{eq:dipDom4}
  \frac{d}{d \alpha} \frac 1x \mathbf E_{x\ast}^{\alpha}[T_0]\big |_{\alpha=0}& = \frac 59 \beta - \frac{77}{100} \gamma + o(x)\quad \mbox{ as }x \to 0,
  \\\label{eq:dipDom5}
  \frac{d}{d \alpha} f^\alpha(x)\big |_{\alpha=0}& = \beta\Big(\frac 43 - \frac 23 x\Big) - \gamma\Big(\frac 85 - \frac{12}{5}x + \frac 85 x^2 - \frac 25x^3\Big).
\end{align}
In the case of a recessive $A$-allele, the heterozygote $AB$ plays
strategy $S_2$. In this case, the fitness advantage of $A$ is then
$\alpha(x(x^2a + (1-x^2)b) + (1-x)(x^2 c + (1-x^2)d))$, whereas the fitness advantage of
$B$ is $\alpha(x^2 c + (1-x^2)d)$. In total, $X$ follows
\begin{align*}
  dX & = \alpha X^2(1-X)( X^2 a + (1-X^2)b - X^2 c - (1-X^2)d)dt + \sqrt{X(1-X)}dW
  \\ & = \alpha X^2(1-X)(\beta - \gamma X^2)dt + \sqrt{X(1-X)}dW
\end{align*}
for $\beta = b-d, \gamma = b-d+c-a$.  From Corollaries \ref{cor1},
\ref{cor2}, \ref{cor3}, \ref{cor4} and \ref{cor5}, used with
$\psi(x) = x(\beta - \gamma x^2)$, we obtain,
\begin{align}\label{eq:dipRec1}
   \frac{\partial}{\partial \alpha}\mathbf P^\alpha_x(T_1 \leq T_0)\big |_{\alpha=0}
   &= \frac 13x \Big(\beta - \frac 3{10} \gamma\Big)+o(x)\quad \mbox{ as }x \to 0,
  \\ \label{eq:dipRec2}  \frac{\partial}{\partial \alpha} \mathbf E^\alpha_x[T]\big |_{\alpha=0}
 &=\frac{1}{6}x \gamma +o(x)\quad \mbox{ as }x \to 0,
  \\\label{eq:dipRec3}
  \frac{d}{d \alpha}\mathbf E_{0+}^{\alpha \ast}[T]\big |_{\alpha=0}
   &= -\frac{1}{3}\Big(\frac 13 \beta - \frac{29}{100}\gamma\Big),
  \\\label{eq:dipRec4}
  \frac{d}{d \alpha} \frac 1x \mathbf E_{x\ast}^{\alpha}[T_0]\big |_{\alpha=0} 
   &= -\frac 59 \beta + \frac{27}{100} \gamma + o(x)\quad \mbox{ as }x \to 0,
  \\\label{eq:dipRec5}
  \frac{d}{d \alpha} f^\alpha(x)\big |_{\alpha=0}& = -\frac 13\beta\Big(1 - \frac{1}{200}x\Big) + \frac{1}{10}\gamma\Big(1 + \frac{1}{9}x + x^2 - 4x^3\Big).
\end{align}
Equations \eqref{eq:dipDom1} and \eqref{eq:dipRec1} correspond to (and explain) the so-called  2/5 and 3/10 rules
in \cite{Hashimoto2009}.
\section{Discussion}\label{Sec4}
As described in Sections~\ref{Sec1} and \ref{sec3}, the results from
Section~\ref{Sec2} unify from the perspective of a diffusion
approximation a number of recent findings, by \cite{Nowak2004} on the
1/3 rule, by \cite{Altrock2009}, \cite{Altrock2010, Altrock2012} on
stochastic slowdown, by \cite{Hashimoto2009} on the 2/5 and 3/10 rules
in evolutionary games, and by \cite{LachmannMafessoni2015} on
``selective strolls''. In addition, they lead to the approximate
expected frequency spectrum \eqref{ET3WF} for low levels of selection.
  
A central tool for proving the results stated in Section~\ref{Sec2} is
the Green function $G^\alpha(x,y)$ of the process~$X$ given by
\eqref{eq:SDE}; see also \eqref{eq:8313}--\eqref{eq:8315} below.
Recall that the occupation measure $G^\alpha(x,y)dy$ is the expected
amount of time which $X$ (started in $x$) spends in $dy$ before time
$T$.

\paragraph{Hitting probabilities.}  Here we describe how to quickly
arrive at \eqref{eq:change} and, more generally, at the assertion of
Theorem \ref{T1}, by using the Green function. This is based on ideas
of \citep{Rousset2003} and \citep{Ladret2007} for a time-discrete
situation, and becomes even more elegant in the diffusion setting. We
first note that
\begin{equation}
  \label{eq:change1}
  \begin{aligned}
    \mathbf P^\alpha_x(X_T=1)  &
     = \mathbf E^\alpha_x[X_T] \\
   & = x +
    \int_0^\infty \frac{d}{dt} \mathbf E^\alpha_x[X_t]dt \\  &= x + 
    \int_0^\infty \mathbf E^\alpha_x[\alpha\mu(X_t)]dt \\ & = x + \alpha
    \int_0^1 G^\alpha(x,y)\mu(y) dy 
    \\Ê&= x + \alpha \int_0^1
    G^0(x,y)\mu(y) dy + \mathcal O(\alpha^2).
   \end{aligned}
\end{equation}   
For the last equality, we note (see the beginning of
Appendix~\ref{Sec5}) that $G^\alpha = G^0 +\mathcal O(\alpha)$ and
that
\begin{align*}
  G^0(x,y) & = \begin{cases} \displaystyle 2x(1-y)\frac 1{\sigma^2(y)}, & 0\leq x\leq y\leq 1,
    \\[2ex]  \displaystyle 2(1-x)y\frac 1{\sigma^2(y)}, & 0\leq y\leq x\le 1.\end{cases}
\end{align*}
Hence we may  continue \eqref{eq:change1} as
\begin{equation*}
  \begin{aligned}  
    = x + 2\alpha \Big( (1-x)\int_0^x y\psi(y) dy + x\int_x^1
    (1-y)\psi(y) dy\Big)+ \mathcal O(\alpha^2),
  \end{aligned}
\end{equation*}
which proves Theorem \ref{T1}. Specializing to \eqref{drift} (see also
Corollary \ref{cor1}) gives \eqref{eq:change} and thus the generalized
1/3 rule.

\paragraph{Unconditional hitting times.} 
Since $\mathbf E_x^\alpha[T] = \int_0^1 G^\alpha(x,y)dy$, the Green
function plays a central role for Theorem~\ref{T2}; indeed, the
assertion there can be understood as a statement on the
asymptotics of
$\frac{\partial}{\partial \alpha}\mathbf E^\alpha_x[T]\big
|_{\alpha=0} = \int_0^1\frac{\partial}{\partial \alpha} G^\alpha(x,y)
\Big |_{\alpha=0} dy $ as $x \to 0$. The proof of Theorem~\ref{T2}
(see the right hand side of \eqref{eq:green}) tells the following
refinement of \eqref{eq:T2}:
\begin{align}
  \label{eq:ref1}
  \frac 1x \frac{\partial}{\partial\alpha} G^\alpha(x,y)\Big|_{\alpha = 0} \xrightarrow{x\to 0} \frac 4{\sigma^2(y)}
  \Big((1-y)\int_0^y (1-z)\psi(z)dz - y \int_y^1 (1-z)\psi(z)dz\Big).
\end{align}
Setting $\psi=1$, this gives (compare with \eqref{uTc})
$$\frac{\partial}{\partial \alpha} G^\alpha(x,y) \Big |_{\alpha=0} = 2x  \frac{y(1-y)}{\sigma^2(y)} +o(x)\quad\mbox{ as } x\to 0\, .$$
 For an intuitive
interpretation, note that as long as $\alpha$ is positive there is a
probability of escaping a quick hitting of $0$, which for small
$\alpha$ contributes to the occupation measure $G^\alpha(x,y) \, dy$
in a way that is asymptotically proportional to the neutral case. In particular, hitting the boundary on average takes longer for small
positive $\alpha$ than under neutrality, i.e.~for $\alpha=0$. 

For the case $\psi(x) = x^k$, the integrand in \eqref{uTb} can be
written as
$$ \frac 1x \frac{\partial}{\partial\alpha} G^\alpha(x,y)\Big|_{\alpha = 0} \xrightarrow{x\to 0} 
\frac{4y(1-y)}{\sigma^2(y)}\frac{1}{(k+1)(k+2)}\Big(k+1)y^k -
\sum_{i=0}^{k-1} y^i\Big).$$ (with the convention that the sum over
the empty set is 0).

Considering the case $\sigma^2(x) = x(1-x)$ as in \eqref{WF} and
$k=1$, we note that the right hand side is {\em antisymmetric} around
$1/2$. The leading term in this difference comes from the paths which,
after starting near $0$, escape a quick hitting of $0$; the
antisymmetry around $1/2$ has to be attributed to the linearity of
$\psi$ and leads to a vanishing right hand side. For $k>1$, we find by
integrating the right hand side that
\begin{align*}
  \frac 1x \frac{\partial}{\partial \alpha}\mathbf E^\alpha_x[T] \xrightarrow{x\to 0} - \frac{4}{(k+1)(k+2)} \sum_{i=2}^{k}\frac 1i.
\end{align*}
In particular, the expected hitting time for small positive $\alpha$
is smaller than under neutrality.

\paragraph{Conditional hitting times.}
As for unconditional hitting times, the assertions of
Theorems~\ref{T3} and \ref{T4} are true even on the level of Green
functions. Conditional under $T_1 < T_0$, and writing $G^\ast$ for the
Green function in this case, we have (see formula \eqref{eq:dsj} in
the Appendix)
\begin{align*}
  \frac{\partial}{\partial\alpha} G^\ast(0+,y)\Big|_{\alpha = 0} = \frac{4}{\sigma^2(y)}
  \Big( (1-y)^2 \int_0^y
  z\psi(z) dz -y^2 \int_y^1 (1-z)\psi(z) dz\Big).  
\end{align*}
Specializing to $\psi(x) = x^k$, this gives (as a reformulation of the
integrand in \eqref{eq0:cor3})
\begin{align*}
  \frac{\partial}{\partial\alpha} G^\ast(0+,y)\Big|_{\alpha = 0} = \frac{4y^2(1-y)}{\sigma^2(y)} \frac{1}{(k+1)(k+2)}
  \Big((k+1)y^k - \sum_{i=0}^k y^i\Big)\leq 0
\end{align*}
with equality if and only if $k=0$. In particular, the expected
hitting time of~1 decreases with $\alpha$ for small $\alpha$ and small
initial value as long as $k\geq 1$. In Remark \ref{mustar} we will see
a still finer result: again for small $\alpha$ and small $x$,
conditional on $T_1 < T_0$, the additional infinitesimal mean
displacement vanishes for $k=0$ (cf.\ \eqref{eq:cor3}) and is strictly
positive and increases with $\alpha$ (which makes the expected hitting
time shorter) for $k \ge 1$.

Conditional under $T_0 < T_1$, and writing $G_\ast$ for the Green
function in this case, we see from formula \eqref{eq:819} in the
Appendix (see also the integrand in \eqref{eq:T4}) that
\begin{align*}
  \frac 1x \frac{\partial}{\partial\alpha} G_\ast(x,y)\Big|_{\alpha = 0} \xrightarrow{x\to 0}
  \frac{4}{\sigma^2(y)}
  \Big(
  (1-y)^2 \int_0^y (1-2z)\psi(z)dz - 2 y(1-y)\int_y^1(1-z)
  \psi(z)dz\Big).
\end{align*}
For $\psi(x) = x^k$, straightforward calculations give that for small
positive $\alpha$ the process $X$ stays in~$dy$ longer than under
neutrality, i.e. for $\alpha=0$, if and only if
$\tfrac 12 ky^k > \sum_{i=0}^{k-1} y^i$. For $k=0,1$, such $y$'s do not
exist. 

\paragraph{Frequency spectrum.}
Changes in the frequency spectrum are often used to infer deviations
from neutral evolution. From Theorem~\ref{T5}, frequencies at $x$ are
higher for small positive~$\alpha$ than for $\alpha=0$ if and only if
$h(x) > h(1)$ for $h(x) := \frac 1x\int_0^x (1-y)\psi(y) dy$. For
$\psi(x) = x^k$, we find that
$h(x) = \frac{1}{k+1}x^k - \frac{1}{k+2}x^{k+1}$. Thus, $h(x) > h(1)$ if and only if
$x^k((k+2)-(k+1)x)>1$. For $k=0$, this is the case for all $x$,
whereas for $k=1$, this is only the case for $x>1/2$. In other words,
high-frequency variants are more abundant under low levels of linear
frequency dependent selection than under neutrality.

\begin{appendix}
\section{Proofs}\label{Sec5}
In what follows, for notational convenience we will suppress the
superscript $\alpha$ and simply write $\mathbf P_x$, $\mathbf E_x$,
$G(x,y)$, \ldots, instead of $\mathbf P^\alpha_x$,
$\mathbf E^\alpha_x$, $G^\alpha(x,y)$,\ldots .

We will express the hitting probability  sxtated in
Theorem~\ref{T1} by the scale function of $X$ (see e.g.\
\cite[p. 192ff]{KarlinTaylor1981}) given by
\begin{equation}\label{eq:scale}
  S(x)  := \int_0^x e^{-2 \alpha \int_0^y \psi(z) dz}dy.
\end{equation}
Then, we have 
\begin{align}
  \label{eq:8312}
  \mathbf P_x(T_1<T_0) 
  & = \frac{S(x)-S(0)}{S(1)-S(0)}.
    \intertext{Moreover, concerning   Theorems~\ref{T2}--\ref{T4},  the expected amount of time which the diffusion
    started in $x$ spends in~$dy$, is $G(x,y)dy$, with the Green function}
    \label{eq:8313}
    G(x,y) & = \begin{cases} \displaystyle 2\mathbf P_x(T_1<T_0) 
      \frac{S(1)-S(y)}{\sigma^2(y)S'(y)}=2\mathbf P_y(T_0<T_1) \frac{S(x)-S(0)}{\sigma^2(y)S'(y)}, & x\leq y,\\[3ex]
      \displaystyle 2\mathbf P_x(T_0<T_1)
      \frac{S(y)-S(0)}{\sigma^2(y)S'(y)} = 2\mathbf P_y(T_1<T_0)
      \frac{S(1)-S(x)}{\sigma^2(y)S'(y)}, &
      x\geq y.\end{cases},
                                            \intertext{When conditioned on $\{T_1<T_0\}$, this changes to}
  \label{eq:8314}
  G^\ast(x,y) 
  & = G(x,y)\frac{\mathbf P_y(T_1<T_0)}{\mathbf P_x(T_1<T_0)},
    \intertext{whereas on $\{T_0<T_1\}$, the Green function is}
    G_\ast(x,y)     \label{eq:8315}
           & = G(x,y)\frac{\mathbf P_y(T_0<T_1)}{\mathbf
             P_x(T_0<T_1)}.
\end{align}
We already gave a proof of Theorem \ref{T1} in Section \ref{Sec4}; here,  we give another short proof using \eqref{eq:scale} and \eqref{eq:8312}.
\begin{proof}[Proof of Theorem \ref{T1}]
  Linearizing \eqref{eq:scale} and using Fubini we obtain
  \begin{equation}
    \label{eq:scaleapprox}
    \begin{aligned}
      S(x) & = x - 2\alpha \int_0^x \int_0^y \psi(z) dz dy + \mathcal
      O(\alpha^2) = x - 2\alpha \int_0^x \int_z^x \psi(z) dy dz +
      \mathcal O(\alpha^2) \\ & = x - 2\alpha \int_0^x (x-y)\psi(y) dy
      + \mathcal O(\alpha^2).
    \end{aligned}
  \end{equation}
  Therefore,
  \begin{equation}
  \label{eq:fixprob}
  \begin{aligned}
    \mathbf P_x(T_1 < T_0) & = \frac{S(x) - S(0)}{S(1) - S(0)} = x -
    2\alpha \int_0^x (x-y) \psi(y) dy + 2\alpha x \int_0^1(1-y)
    \psi(y) dy + \mathcal O(\alpha^2),
  \end{aligned}
    \end{equation}
  and the result follows.
\end{proof}
%
\begin{proof}[Proof of Corollary \ref{cor1}]
  1. Here, $\psi(x) = x^k$ and therefore, the right hand side of
  \eqref{mainres} becomes
  \begin{align*}
    x\int_0^1 (1-y)y^kdy - \int_0^x (x-y)y^k dy = (x-x^{k+2})\Big(\frac{1}{k+1} 
    - \frac{1}{k+2}\Big)  = x(1-x^{k+1}) \frac{1}{(k+1)(k+2)}.
  \end{align*}
  2. Here, we compute for the right hand side of \eqref{mainres}
  \begin{align*}
    x \int_0^1 (1-y)^{k+1} & dy - \int_0^x (1-y - (1-x))(1-y)^k dy 
    \\ & = \frac{x}{k+2} - \frac{1}{k+2} (1 - (1-x)^{k+2}) + \frac{1}{k+1}(1-x)(1-(1-x)^{k+1})
    \\ & = \frac{1}{(k+1)(k+2)}\Big((1-x) - (1-x)^{k+2}\Big).
  \end{align*}
\end{proof}

\begin{proof}[Proof of Theorem \ref{T2}]
  Since $\mathbf E_x[T] = \int_0^1 G(x,y)dy$, we have to approximate
  the Green function, as given in \eqref{eq:8313}. First,
  \begin{equation}
    \label{eq:scaleapprox2}
    \begin{aligned}
      S(x) & = x - 2\alpha \int_0^x (x-y)\psi(y) dy + \mathcal O(\alpha^2),
      \\
      S'(x) & = 1 - 2\alpha \int_0^x \psi(y)dy + \mathcal O(\alpha^2).
    \end{aligned}
  \end{equation}
  Then, for $0 \leq x\leq y \leq 1$ (use \eqref{mainres} for the
  second equality)
  \begin{align}\notag
     G(x,y) & 
              = 2\mathbf P_x(T_1<T_0) \frac{S(1)-S(y)}{\sigma^2(y)S'(y)}
    \\ & \notag= \frac{2}{\sigma^2(y)}
         \Big(x + 2\alpha x\Big(\int_0^1 (1-z)\psi(z) dz - \frac 1x \int_0^x (x-z)\psi(z) dz \Big)\Big)
    \\ & \notag\qquad \cdot
         \Big(1 - y - 2\alpha\Big(\int_0^1(1-z)\psi(z) dz - \int_0^y(y-z)\psi(z)dz
         \Big)\Big(1 + 2\alpha\int_0^y \psi(z)dz\Big) + \mathcal O(\alpha^2)
    \\ & \notag= \frac{2}{\sigma^2(y)} \Big(x(1-y) + 2\alpha \Big(x(1-y) \int_0^y\psi(z) dz 
         - 
         (1-y)\int_0^x (x-z)\psi(z)dz \\ & \notag\qquad \qquad \qquad \qquad \qquad - xy\int_0^1 (1-z)\psi(z) dz 
                                           + x\int_0^y (y-z)\psi(z) dz\Big)\Big)+ \mathcal O(\alpha^2)
    \\ & = \frac{2}{\sigma^2(y)} \Big(x(1-y) + 2\alpha \Big((1-x)(1-y)\int_0^x z \psi(z) dz
         + x(1-y)\int_x^y (1-z)\psi(z) dz \notag
    \\ & \qquad \qquad \qquad \qquad \qquad \qquad \qquad
         -xy \int_y^1 (1-z)\psi(z) dz\Big)\Big)+ \mathcal O(\alpha^2)\label{eq:green}
         \intertext{while for $0 \leq y\leq x \leq 1$}
         G(x,y) & \notag  = 2 \mathbf P_x(T_0<T_1)\frac{S(y)-S(0)}{\sigma^2(y)S'(y)} dy 
    \\ & = \frac{2}{\sigma^2(y)}\notag
         \Big(1-x - 2\alpha x\Big(\int_0^1 (1-z)\psi(z) dz - \frac 1x \int_0^x (x-z)\psi(z) dz \Big)\Big)
    \\ & \notag\qquad \qquad \qquad \cdot
         \Big(y - 2\alpha \int_0^y(y-z)\psi(z)dz \Big)\Big(1 + 2\alpha\int_0^y \psi(z)dz\Big) + \mathcal O(\alpha^2)
    \\ & = \frac{2}{\sigma^2(y)} \Big((1-x)y + 2\alpha \Big((1-x)y\int_0^y \psi(z) dz
         - (1-x) \int_0^y (y-z)\psi(z) dz \notag
    \\ & \qquad \qquad \qquad \qquad \qquad \qquad \qquad -xy
         \int_0^1 (1-z)\psi(z) dz + y \int_0^x (x-z)\psi(z) dz\Big)\Big)+ \mathcal O(\alpha^2) \notag
    \\ & = \frac{2}{\sigma^2(y)} \Big((1-x)y + 2\alpha \Big(
(1-x)(1-y)
         \int_0^y z\psi(z) dz - (1-x)y\int_y^x z\psi(z) dz \notag
    \\ & \qquad \qquad \qquad \qquad \qquad \qquad \qquad - xy\int_x^1 (1-z) \psi(z) dz
         \Big)\Big)+ \mathcal O(\alpha^2). \label{eq:green2}
  \end{align}
  Dividing \eqref{eq:green} and \eqref{eq:green2} by $x$ and letting
  $x\to 0$ gives the result.
\end{proof}

\begin{proof}[Proof of Corollary \ref{cor2}]
  1.  The right hand side of~\eqref{eq:T2} becomes
  \begin{align*}
    & \int_0^1 \int_0^y \frac{1-y}{\sigma^2(y)}(1-z)z^k dz dy - \int_0^1 \int_y^1 \frac{y}{\sigma^2(y)}(1-z)z^k dz dy
    \\ & = \int_0^1 \frac{1-y}{\sigma^2(y)}\Big( \frac{1}{k+1}y^{k+1} - \frac{1}{k+2} y^{k+2}\Big) - \frac{y}{\sigma^2(y)}\Big(\frac{1}{k+1}(1-y^{k+1}) - \frac{1}{k+2}(1-y^{k+2})\Big) dy
    \\ & = \int_0^1 \frac{(1-y)y^{k+1}}{\sigma^2(y)}\Big( \frac{1}{(k+1)(k+2)} + \frac{1}{k+2} (1-y)\Big) 
    \\ & \qquad \qquad \qquad \qquad 
         - \frac{y(1-y^{k+1})}{\sigma^2(y)}\frac{1}{(k+1)(k+2)} + \frac{(1-y)y^{k+2}}{\sigma^2(y)} \frac{1}{k+2}\Big) dy
    \\ & = \int_0^1 \frac{1}{\sigma^2(y)} \Big(\frac{(1-y)y^{k+1}}{k+2} - \frac{y(1-y^k)}{(k+1)(k+2)}\Big) dy.
  \end{align*}
  2.  The right hand side of~\eqref{eq:T2} becomes
  \begin{align*}
    & \int_0^1 \int_0^y \frac{1-y}{\sigma^2(y)}(1-z)^{k+1} dz dy - \int_0^1 \int_y^1 \frac{y}{\sigma^2(y)}(1-z)^{k+1} dz dy
    \\ & = \int_0^1 \frac{1-y}{\sigma^2(y)}\frac{1}{k+2}(1 - (1-y)^{k+2}) 
         - \frac{y}{\sigma^2(y)}\frac{1}{k+2}(1-y)^{k+2}) dy
    \\ & = \frac{1}{k+2} \int_0^1 \frac{1}{\sigma^2(y)}(1-y)(1-(1-y)^{k+1})dy. \quad \Box
  \end{align*}
\end{proof}

\begin{proof}[Proof of Theorem \ref{T3}]
  Recall from \eqref{eq:8314} that for $x\leq y$
  \begin{align*}
    G^\ast(x,y) & = G(x,y) \frac{\mathbf P_y(T_1<T_0)}{\mathbf P_x(T_1<T_0)} = \frac{2}{\sigma^2(y)} \mathbf P_y(T_0 < T_1)\frac{S(y)-S(0)}{S'(y)}.
  \end{align*}
  With $S, S'$ as in \eqref{eq:scaleapprox2}, and with \eqref{eq:fixprob},  we find that
  \begin{equation}
    \label{eq:dsj}
    \begin{aligned}
      \lim_{x\to 0} & \mathbf E^{\alpha\ast}_x[T_1] = \int_0^1
      G^\ast(0,y)dy \\ & = 2 \int_0^1 \frac{1}{\sigma^2(y)}\Big((1 -
      y) + 2\alpha \Big( \int_0^y (y-z)\psi(z)dz - y\int_0^1
      (1-z)\psi(z)dz\Big) \Big) \\ & \qquad \qquad \qquad \qquad \cdot
      \Big(y - 2\alpha \int_0^y (y-z)\psi(z)dz\Big)\cdot \Big(1 +
      2\alpha \int_0^y \psi(z)dz\Big) \frac{1}{\sigma^2(y)}dy +
      \mathcal O(\alpha^2) \\ & = 2\int_0^1 \frac{(1-y)y}{\sigma^2(y)}
      dy + 4\alpha \int_0^1 \Big(\frac{y(1-y)}{\sigma^2(y)} \int_0^y
      \psi(z)dz + \frac{y}{\sigma^2(y)} \int_0^y (y-z) \psi(z)dz \\ &
      \qquad \qquad \qquad \qquad -\frac{y^2}{\sigma^2(y)}
      \int_0^1(1-y+y-z) \psi(z)dz - \frac{1-y}{\sigma^2(y)}\int_0^y
      (y-z) \psi(z)dz \Big)dy + \mathcal O(\alpha^2) \\ & = 2\int_0^1
      \frac{(1-y)y}{\sigma^2(y)} dy + 4\alpha \int_0^1
      \Big(\frac{y(1-y)}{\sigma^2(y)} \int_0^y \psi(z)dz -
      \frac{(1-y)^2}{\sigma^2(y)} \int_0^y (y-z) \psi(z)dz \\ & \qquad
      \qquad \qquad \qquad - \frac{y^2}{\sigma^2(y)} \int_y^1(1-z)
      \psi(z)dz - \frac{y^2(1-y)}{\sigma^2(y)}\int_0^y \psi(z)dz\Big)
      dy+ \mathcal O(\alpha^2) \\ & = 2\int_0^1
      \frac{(1-y)y}{\sigma^2(y)} dy + 4\alpha \int_0^1
      \Big(\frac{(1-y)^2}{\sigma^2(y)} \int_0^y z\psi(z)dz -
      \frac{y^2}{\sigma^2(y)} \int_y^1(1-z) \psi(z)dz \Big)dy +
      \mathcal O(\alpha^2)
    \end{aligned}
  \end{equation}
  and we are done.
\end{proof}

\begin{proof}[Proof of Corollary \ref{cor3}]
  1. For $\psi(x) = x^k$, the right hand side of \eqref{eq:T3} becomes
  \begin{align*}
    & \int_0^1 \frac 1{\sigma^2(y)} \left( (1-y)^2 \int_0^y
      z^{k+1} dz -y^2 \int_y^1(1-z)z^kdz \right)dy
    \\ & = \int_0^1 \frac 1{\sigma^2(y)} \left( (1-y)^2 \frac{1}{k+2} y^{k+2} -y^2 \Big( \frac{1}{k+1}(1 - y^{k+1}) - \frac{1}{k+2}(1-y^{k+2})\Big)\right)dy
    \\ & = \int_0^1 \frac 1{\sigma^2(y)} \left( (1-y)^2 y^{k+2} \frac{1}{k+2} -y^2(1-y^{k+1})\frac{1}{(k+1)(k+2)} + y^{k+3}(1-y) \frac{1}{k+2}\right)dy
    \\ & = \int_0^1 \frac{1}{\sigma^2(y)} \Big(\frac{(1-y)y^{k+2}}{k+2} - \frac{y^2(1-y^{k+1})}{(k+1)(k+2)}\Big)dy.
  \end{align*}
  2. For $\psi(x) = (1-x)^k$, the right hand side of \eqref{eq:T3}
  becomes
  \begin{align*}
    & \int_0^1 \frac 1{\sigma^2(y)} \left( (1-y)^2 \int_0^y
      z(1-z)^k dz -y^2 \int_y^1(1-z)^{k+1}dz \right)dy
    \\ & \stackrel{y\to 1-y, z\to 1-z} = \int_0^1 \frac 1{\sigma^2(1-y)} \left( y^2 \int_y^1
         (1-z)z^k dz -(1-y)^2 \int_0^y z^{k+1}dz \right)dy
    \\ & = - \int_0^1 \frac{1}{\sigma^2(1-y)} \Big(\frac{(1-y)y^{k+2}}{k+2} - \frac{y^2(1-y^{k+1})}{(k+1)(k+2)}\Big)dy.
    \\ & = - \int_0^1 \frac{1}{\sigma^2(y)} \Big(\frac{y(1-y)^{k+2}}{k+2} - \frac{(1-y)^2(1-(1-y)^{k+1})}{(k+1)(k+2)}\Big)dy,
  \end{align*}
  where in the second equality we have used the display from part 1 of the proof.
\end{proof}
\begin{remark}\label{mustar}
  We append here the calculation (for small $\alpha$) of the
  infinitesimal mean displacement of the diffusion process $X$
  conditioned hit 1; this was announced in Section~\ref{Sec4} as an
  additional explanation of the monotone increase of
  $\mathbf E_{0+}^{\alpha \ast}[T_1]$ for small $\alpha$. Recall that
  for the solution of \eqref{eq:SDE}, conditioned to hit~1, $\mu$
  becomes (see e.g.\ \citealp[p.\ 263]{KarlinTaylor1981})
  \begin{align*}
    \mu^\ast(x) & = \alpha \mu(x) + \frac{S'(x)}{S(x) -S(0)} \sigma^2(x) 
                  = \alpha \mu(x) + \Big(\frac{1}{x}\frac{1-2\alpha \int_0^x \psi(y) dy}{1-\frac{2\alpha}{x} \int_0^x(x-y) \psi(y) dy}\Big) \sigma^2(x) + \mathcal O(\alpha^2)
    \\ & = \alpha \mu(x) + \frac{1}{x}\Big(1 - 2\alpha\Big( \int_0^x \psi(y) dy - \int_0^x\frac{x-y}{x}\psi(y) dy\Big)\Big) \sigma^2(x) + \mathcal O(\alpha^2)                  
    \\ & = \frac 1x \sigma^2(x) + \alpha \Big( \psi(x) - \frac{2}{x^2} \int_0^x y\psi(y) dy\Big) \sigma^2(x) + \mathcal O(\alpha^2).
  \end{align*}
  In particular, if $\psi(x) = x^k$, 
  \begin{align*}
    \mu^\ast(x) & = \frac 1x \sigma^2(x) + \alpha \Big( x^k - \frac{2}{k+2}x^k\Big)\sigma^2(x) + \mathcal O(\alpha^2)
                  = \frac 1x \sigma^2(x) + \alpha \frac{k}{k+2} x^k \sigma^2(x) + \mathcal O(\alpha^2).
  \end{align*}
  This shows that the additional infinitesimal mean displacement increases with $\alpha$.
  Moreover, the additional infinitesimal mean displacement vanishes for $k=0$,
  which explains \eqref{eq:cor3}. 
\end{remark}
\begin{proof}[Proof of Theorem~\ref{T4}]
  We start by writing the Green function for the diffusion $X$,
  conditional to hit~0, which is from \eqref{eq:8315}
  \begin{align}\label{eq:817}
    G_\ast(x,y) = G(x,y) \frac{\mathbf P_y(T_0<T_1)}{\mathbf P_x(T_0<T_1)}.
  \end{align}
  Note that for $x\leq y$
  \begin{equation}
    \label{eq:818}
    \begin{aligned}
      \frac{\mathbf P_y(T_0<T_1)}{\mathbf P_x(T_0<T_1)} & = \frac{1-y
        - 2\alpha \Big(y \int_0^1 (1-z)\psi(z)dz - \int_0^y
        (y-z)\psi(z) dz\Big)}{1-x - 2\alpha
        \Big(x\int_0^1(1-z)\psi(z)dz - \int_0^x(x-z)\psi(z) dz\Big)} +
      \mathcal O(\alpha^2) \\ & = \frac{1-y}{1-x} + 2\alpha \Big(
      \frac{(1-y)x}{(1-x)^2}\int_0^1(1-z)\psi(z)dz -
      \frac{(1-y)}{(1-x)^2} \int_0^x(x-z)\psi(z) dz \\ & \qquad \qquad
      \qquad \qquad - \frac{y}{1-x} \int_0^1 (1-z)\psi(z)dz +
      \frac{1}{1-x}\int_0^y (y-z)\psi(z) dz\Big) + \mathcal
      O(\alpha^2) \\ & = \frac{1-y}{1-x} + 2\alpha \Big(
      \frac{x-y}{(1-x)^2}\int_0^1(1-z)\psi(z)dz -
      \frac{(1-y)}{(1-x)^2} \int_0^x(x-z)\psi(z) dz \\ & \qquad \qquad
      \qquad \qquad \qquad \qquad \qquad \qquad \qquad \qquad +
      \frac{1}{1-x}\int_0^y (y-z)\psi(z) dz\Big) + \mathcal
      O(\alpha^2) \\ & =\frac{1-y}{1-x} - 2\alpha \Big(
      \frac{1-y}{(1-x)^2}\int_x^y(z-x)\psi(z)dz +
      \frac{y-x}{(1-x)^2}\int_y^1(1-z)\psi(z)dz\Big) + \mathcal
      O(\alpha^2).
    \end{aligned}
  \end{equation}
  Combining the last equality with \eqref{eq:817} and \eqref{eq:green},
  \begin{equation}
    \label{eq:819}
    \begin{aligned}
      \frac{1}{x} \mathbf E^\alpha_{x\ast}[T_0]
      & 
      = \frac{1}{x} \int_0^x \frac{2}{\sigma^2(y)} \mathbf
      P_y(T_0<T_1) \frac{S(y)-S(0)}{S'(y)}dy + \frac{1}{x} \int_x^1
      G(x,y) \frac{\mathbf P_y(T_0<T_1)}{\mathbf P_x(T_0<T_1)}dy \\ &
      = \frac{2}{x}\int_0^x \frac{y}{\sigma^2(y)} dy \\ & \qquad
      \qquad + \int_x^1 \frac{2}{\sigma^2(y)}\Big(1-y + 2\alpha
      \Big((1-y)\int_0^y (1-z)\psi(z)dz -
      y\int_y^1(1-z)\psi(z)dz\Big)\Big) \\ & \qquad \qquad \qquad
      \cdot \Big(1-y - 2\alpha\Big((1-y)\int_0^yz\psi(z)dz + y
      \int_y^1 (1-z)\psi(z) dz\Big)\Big)dy + \mathcal O(\alpha^2, x)
      \\ & = \frac 2x \int_0^x \frac{y}{\sigma^2(y)} dy + 2\int_x^1
      \frac{(1-y)^2}{\sigma^2(y)}dy \\ & \qquad + 4\alpha \int_x^1
      \Big( \frac{(1-y)^2}{\sigma^2(y)} \int_0^y \big((1-z)\psi(z) -
      z\psi(z)\big) dz \\ & \qquad \qquad \qquad \qquad - 
      \frac{y(1-y)}{\sigma^2(y)} \int_y^1 \big((1-z)\psi(z) +
      (1-z)\psi(z)\big)dz\Big) dy + \mathcal O(\alpha^2, x)
    \end{aligned}
  \end{equation}
  and the result follows.
\end{proof}

\begin{proof}[Proof of Corollary \ref{cor4}]
  1. For $\psi(x) = x^k$, the right hand side of \eqref{eq:T4} gives
  \begin{align*}
    & \int_0^1 \frac 1{\sigma^2(y)} \left( (1-y)^2 \int_0^y
      (1-2z)z^{k} dz - 2y(1-y) \int_y^1(1-z)z^kdz \right)dy
    \\ & = \int_0^1 \frac{1-y}{\sigma^2(y)} \left( (1-y) \Big(\frac{1}{k+1} y^{k+1} - \frac{2}{k+2} y^{k+2}\Big) - 2y \Big( \frac{1}{k+1}(1 - y^{k+1}) - \frac{1}{k+2}(1-y^{k+2})\Big)\right)dy
    \\ & = \int_0^1 \frac{1-y}{\sigma^2(y)} \left( \big(y(1-y^{k+2}) - (1-y)y^{k+2}\big)\frac{2}{k+2} + \big( (1-y)y^{k+1} -2y(1-y^{k+1})\big) \frac{1}{k+1}\right)dy
    \\ & = \int_0^1 \frac{y(1-y)}{\sigma^2(y)} \Big(\frac{2(1-y^{k+1})}{k+2} - \frac{2 - y^k - y^{k+1}}{k+1}\Big)dy = \int_0^1 \frac{y(1-y)}{\sigma^2(y)} \Big(\frac{k(1-y^{k+1})}{(k+1)(k+2)} - \frac{1 - y^k}{k+1}\Big)dy.
  \end{align*}
  2. For $\psi(x) = (1-x)^k$, the right hand side of \eqref{eq:T4}
  becomes
  \begin{align*}
    & \int_0^1 \frac 1{\sigma^2(y)} \left( (1-y)^2 \int_0^y
      (1-2z)(1-z)^k dz - 2y(1-y) \int_y^1(1-z)^{k+1}dz \right)dy
    \\ & \stackrel{y\to 1-y, z\to 1-z} = \int_0^1 \frac y{\sigma^2(1-y)} \left( - y \int_y^1
         (1-2z)z^k dz -2(1-y) \int_0^y z^{k+1}dz \right)dy
    \\ & = - \int_0^1 \frac{y}{\sigma^2(1-y)} \Big( \frac{y(1-y^{k+1})}{k+1} + \frac{2(1-y)y^{k+2} - 2y(1-y^{k+2})}{k+2}\Big)dy.
    \\ & = - \int_0^1 \frac{y^2}{\sigma^2(1-y)} \Big(\frac{1-y^{k+1}}{k+1} - \frac{2(1-y^{k+1})}{k+2}\Big)dy = 
         \int_0^1 \frac{y^2(1-y^{k+1})}{\sigma^2(1-y)}\frac{k}{(k+1)(k+2)}dy.
    \\ & = 
         \int_0^1 \frac{(1-y)^2(1-(1-y)^{k+1})}{\sigma^2(y)}\frac{k}{(k+1)(k+2)}dy.
  \end{align*}
\end{proof}

\begin{proof}[Proof of Theorem~\ref{T5}]
 Because of  \eqref{mainres}, $f^\alpha$
  satisfies
  \begin{align*}
    f^\alpha(x) & = \frac{1}{\sigma^2(x)} \Big(1 + 2\alpha \int_0^x \psi(y)dy\Big)\Big(1 - x - 2\alpha \Big(x \int_0^1(1-y)\psi(y) dy - \int_0^x(x-y)\psi(y) dy\Big)\Big) + \mathcal O(\alpha^2)
    \\ & = \frac{1-x}{\sigma^2(x)} + \frac{2\alpha}{\sigma^2(x)}\Big((1-x)\int_0^x \psi(y)dy - x \int_0^1(1-y)\psi(y) dy + \int_0^x(x-y)\psi(y) dy\Big)+ \mathcal O(\alpha^2)
    \\ & = \frac{1-x}{\sigma^2(x)} + \frac{2\alpha x}{\sigma^2(x)}\Big(\frac 1x \int_0^x (1-y)\psi(y)dy - \int_0^1(1-y)\psi(y) dy\Big)+ \mathcal O(\alpha^2).
  \end{align*}
\end{proof}

\begin{proof}[Proof of Corollary~\ref{cor5}]
  1. We compute
  \begin{align*}
    \frac 1x \int_0^x (1-y)y^k dy - \int_0^1(1-y)y^k dy & = \frac{1}{k+1}x^k - \frac{1}{k+2}x^{k+1} - \frac{1}{(k+1)(k+2)}
    \\ & = - \frac{1 - x^k(1 + (1-x)(k+1))}{(k+1)(k+2)}.
  \end{align*}
  2. We compute
  \begin{align*}
    \frac 1x \int_0^x (1-y)^{k+1} dy - \int_0^1(1-y)^{k+1} dy & = \frac{1}{k+2}\Big(\frac 1x (1-(1-x)^{k+2}) - 1 \Big).
  \end{align*}
\end{proof}
\end{appendix}

\subsubsection*{Acknowledgments}
This research was supported by the DFG priority program SPP 1590, and in
particular through grant Pf-672/6-1 to PP, and Wa-967/4-1 to AW. We
thank Arne Traulsen for pointing out the relevant work of
\cite{LachmannMafessoni2015}. Also, we are grateful to the Editor and two referees for suggestions that helped to improve the presentation.

\end{document}